\newcommand{\1}{{\mathbb{1}}}
\newcommand{\Liminf}{\mathop{\underline{\lim}}\limits}
\newcommand{\argmax}{\mathop{\rm argmax}\limits}
\newcommand{\Ex}{\mathop{\mathbf{\kern 0pt E}}\nolimits}
\newcommand{\Pb}{\mathop{\mathbf{\kern 0pt P}}\nolimits}
\newcommand{\PP}{\mathop{\mathbb{\kern 0pt P}}\nolimits}
\newcommand{\EE}{\mathop{\mathbb{\kern 0pt E}}\nolimits}
\newtheorem{theorem}{Theorem}
\newtheorem{lemma}{Lemma}
\newtheorem{proposition}{Proposition}
\newtheorem{definition}{Definition}
\begin{document}

\title{On Hypothesis Testing for Poisson Processes. Regular Case}

\author{S. \textsc{Dachian}\\
{\small Universit\'e Blaise Pascal, Clermont-Ferrand, France}\\[8pt]
Yu. A. \textsc{Kutoyants}\\
{\small Universit\'e du Maine, Le Mans, France and}\\
{\small Higher School of Economics, Moscow, Russia}\\[8pt]
L. \textsc{Yang}\footnote{Corresponding author. E-mail address:
  Lin\_Yang.Etu@univ-lemans.fr}\\
{\small Universit\'e du Maine, Le Mans, France}}

\date{}

\maketitle

\begin{abstract}
We consider the problem of hypothesis testing in the situation when the first
hypothesis is simple and the second one is local one-sided composite.  We
describe the choice of the thresholds and the power functions of the Score
Function test, of the General Likelihood Ratio test, of the Wald test and of
two Bayes tests in the situation when the intensity function of the observed
inhomogeneous Poisson process is smooth with respect to the parameter. It is
shown that almost all these tests are asymptotically uniformly most powerful.
The results of numerical simulations are presented.
\end{abstract}

MSC 2010 Classification: 62M02, 62F03, 62F05.

\medskip

\textsl{Key words:} Hypothesis testing, inhomogeneous Poisson processes,
asymptotic theory, composite alternatives, regular situation.

\section{Introduction}

The hypothesis testing theory is a well developed branch of mathematical
statistics~\cite{LR05}. The asymptotic approach allows to find satisfactory
solutions in many different situations. The simplest problems, like the
testing of two simple hypotheses, have well known solutions. Recall that if we
fix the first type error and seek the test which maximizes the power, then we
obtain immediately (by Neyman-Pearson lemma) the most powerful test based on
the likelihood ratio statistic. The case of composite alternative is more
difficult to treat and here the asymptotic solution is available in the
regular case. It is possible, using, for example, the Score Function test
(SFT), to construct the asymptotically (locally) most powerful test. Moreover,
the General Likelihood Ratio Test (GLRT) and the Wald test (WT) based on the
maximum likelihood estimator are asymptotically most powerful in the same
sense. In the non regular cases the situation became much more complex. First
of all, there are different non regular (singular) situations. Moreover, in
all these situations, the choice of the asymptotically best test is always an
open question.

This work is an attempt to study all these situations on the model of
inhomogeneous Poisson processes. This model is sufficiently simple to allow us
to realize the construction of the well known tests (SFT, GLRT, WT) and to
verify that these test are asymptotically most powerful also for this model,
in the case when it is regular. In the next paper we study the behavior of
these tests in the case when the model is singular. The ``evolution of the
singularity'' of the intensity function is the following: regular case (finite
Fisher information, this paper), continuous but not differentiable (cusp-type
singularity, \cite{DKY-2}), discontinuous (jump-type singularity,
\cite{DKY-2}).  In all the three cases we describe the tests
analytically. More precisely, we describe the test statistics, the choice of
the thresholds and the behavior of the power functions for local alternatives.

Note that the notion of \textit{local alternatives} is different following the
type of regularity/singularity. Suppose we want to test the simple hypothesis
$\vartheta =\vartheta _1$ against the one-sided alternative $\vartheta
>\vartheta _1$. In the regular case, the local alternatives are usually given
by \smash{$\vartheta =\vartheta _1+\frac{u}{\sqrt{n}}$}, $u>0$. In the case of
a cusp-type singularity, the local alternatives are introduced by $\vartheta
=\vartheta _1+u\,n^{-\frac{1}{2\kappa +1}}$, $u>0$. As to the case of a
jump-type singularity, the local alternatives are $\vartheta =\vartheta
_1+\frac{u}{n}$, $u>0$. In all these problems, the most interesting for us
question is the comparison of the power functions of different tests. In
singular cases, the comparison is done with the help of numerical
simulations. The main results concern the limit likelihood ratios in the
non-regular situations.  Let us note, that in many other models of
observations (i.i.d., time series, diffusion processes etc.) the likelihood
ratios have the same limits as here (see, for example, \cite{IH81} and
\cite{DK03}). Therefore, the results presented here are of more universal
nature and are valid for any other (non necessarily Poissonian) model having
one of considered here limit likelihood ratios.

We recall that $X=\left(X_t,\ t\geq 0 \right)$ is an inhomogeneous Poisson
process with intensity function $\lambda \left(t\right)$, $t\geq 0$, if
$X_0=0$ and the increments of $X$ on disjoint intervals are independent and
distributed according to the Poisson law
$$
\Pb\left\{X_t-X_s=k\right\}=\frac{\left(\int_s^t \lambda
\left(t\right) {\rm d}t\right)^k }{k!}\exp\left\{-\int_s^t \lambda
\left(t\right) {\rm d}t\right\}.
$$
In all statistical problems considered in this work, the intensity functions
are periodic with some known period $\tau$ and depend on some one-dimensional
parameter, that is, $\lambda \left(t\right)=\lambda \left(\vartheta
,t\right)$. The basic hypothesis and the alternative are always the same:
$\vartheta =\vartheta _1$ and $\vartheta >\vartheta _1$. The diversity of
statements corresponds to different types of regularity/singularity of the
function $\lambda \left(\vartheta ,t\right)$. The case of unknown period
$\tau$ needs a special study.

The hypothesis testing problems (or closely related properties of the
likelihood ratio) for inhomogeneous Poisson processes were studied by many
authors (see, for example, Brown \cite{Bro71}, Kutoyants \cite{Kut77}, L\'eger
and Wolfson \cite{LW87}, Liese and Lorz \cite{LL97}, Sung \textit{et
  al.}~\cite{STS}, Fazli and Kutoyants \cite{FK05}, Dachian and Kutoyants
\cite{DK09} and the references therein). Note finally, that the results of
this study will appear later in the work \cite{Kut13}.

\section{Auxiliary results}

For simplicity of exposition we consider the model of $n$ independent
observations of an inhomogeneous Poisson process:
$X^n=\left(X_1,\ldots,X_n\right)$, where $X_j=\left(X_j\left(t\right),\ 0\leq
t\leq \tau \right)$, $j=1,\ldots,n$, are Poisson processes with intensity
function $\lambda \left(\vartheta ,t\right)$, $0\leq t\leq \tau $. Here
$\vartheta\in\Theta = [\vartheta _1,b)$, $b<\infty $, is a one-dimensional
  parameter. We have
$$
\Ex_\vartheta X_j\left(t\right)=\Lambda \left(\vartheta
,t\right)=\int_{0}^{t}\lambda \left(\vartheta ,s\right)\;{\rm d}s
$$
where $\Ex_\vartheta $ is the mathematical expectation in the case when the
true value is $\vartheta$. Note that this model is equivalent to the one,
where we observe an inhomogeneous Poisson process $X^T=\left(X_t,\ 0\leq t\leq
T\right)$ with periodic intensity $\lambda(\vartheta,t+j\tau
)=\lambda(\vartheta,t )$, $j=1,2,\ldots,n-1$, and $T=n\tau $ (the period $\tau
$ is supposed to be known). Indeed, if we put $X_j\left(s\right)=X_{s+\tau
  \left(j-1\right)}-X_{\tau \left(j-1\right)}$, $s\in\left[0,\tau \right]$,
$j=1,\ldots,n$, then the observation of one trajectory $X^T$ is equivalent to
$n$ independent observations $X_1,\ldots,X_n$.

The intensity function is supposed to be separated from zero on $\left[0,\tau
  \right]$. The measures corresponding to Poisson processes with different
values of $\vartheta $ are equivalent. The likelihood function is defined by
the equality (see Liese~\cite{Lie75})
\begin{align*}
 L(\vartheta
,X^n)=\exp\left\{\sum_{j=1}^{n}\int_{0}^{\tau }\ln\lambda
\left(\vartheta ,t\right){\rm d}X_j\left(t\right)-n\int_{0}^{\tau
}\left[\lambda
  \left(\vartheta ,t\right)-1 \right]{\rm d}t\right\}
\end{align*}
and the likelihood ratio function is
$$
 L\left(\vartheta ,\vartheta
_1,X^n\right)=L\left(\vartheta,X^n\right)/L\left(\vartheta_1,X^n\right).
$$

We have to test the following two hypotheses
\begin{align*}
\mathscr{ H}_1\quad &:\qquad \vartheta =\vartheta _1,\\
\mathscr{ H}_2\quad &:\qquad \vartheta >\vartheta _1.
\end{align*}
A test $\bar\psi _n=\bar\psi _n\left(X^n\right)$ is defined as the probability
to accept the hypothesis~$\mathscr{ H}_2$. Its power function is
$\beta\left(\bar \psi_n,\vartheta \right)=\Ex_{\vartheta } \bar \psi_n(X^n)$,
$\vartheta >\vartheta _1$.

Denote $\mathcal{ K}_\varepsilon$ the class of tests $\bar \psi _n$ of
asymptotic size $\varepsilon\in\left[ 0,1\right]$:
$$
\mathcal{ K}_\varepsilon =\left\{\bar \psi _n\quad :\quad
\lim_{n\rightarrow
  \infty }\Ex_{\vartheta _1}\bar\psi _n\left(X^n\right)=\varepsilon \right\}.
$$
Our goal is to construct tests which belong to this class and have some
proprieties of asymptotic optimality.

The comparison of tests can be done by comparison of their power functions. It
is known that for any reasonable test and for any fixed alternative the power
function tends to ~$1$. To avoid this difficulty, as usual, we consider
\textit{close} or \textit{contiguous} alternatives. We put $\vartheta
=\vartheta _1+ \varphi _nu$, where $u\in \mathbb{U}_n^+=\bigl[0,\varphi
  _n^{-1}\left(b-\vartheta _1\right)\bigr)$, $\varphi _n=\varphi
  _n\left(\vartheta _1\right)>0$ and $\varphi _n\rightarrow 0$. The rate of
  convergence $\varphi _n\rightarrow 0$ must be chosen so that the normalized
  likelihood ratio
$$
Z_n\left(u\right)=\frac{L\left(\vartheta _1+\varphi
  _nu,X^n\right)}{L\left(\vartheta _1,X^n\right)} ,\qquad u\geq 0,
$$
has a non degenerate limit. In the regular case this rate is usually $\varphi
_n=n^{-1/2}$.

Then the initial problem of hypotheses testing can be rewritten as
\begin{align*}
\mathscr{ H}_1\quad &:\qquad u =0,\\
\mathscr{ H}_2\quad &:\qquad u>0.
\end{align*}

The power function of a test $\bar\psi _n$ is now denoted as
$$
\beta\left(\bar\psi _n,u\right)=\Ex_{\vartheta_1+\varphi _n u}\,\bar\psi
_n,\qquad u>0.
$$

The asymptotic optimality of tests is introduced with the help of the
following definition (see \cite{GR}).

\begin{definition}
We call a test $\psi _n^\star\left(X^n\right) \in \mathcal{ K}_\varepsilon$
locally asymptotically uniformly most powerful (LAUMP) in the class $\mathcal{
  K}_\varepsilon $ if its power function $\beta\left(\psi _n^\star,u\right)$
satisfies the relation: for any test $\bar\psi _n\left(X^n\right)\in \mathcal{
  K}_\varepsilon$ and any $K>0$ we have
$$
\Liminf_{n\rightarrow \infty }\ \inf_{0<u\leq K}\left[\beta\left(\psi
  _n^\star,u\right)-\beta\left(\bar \psi _n,u\right) \right]\geq 0.
$$
\end{definition}

Below we show that in the regular case many tests are LAUMP. In the next paper
\cite{DKY-2}, where we consider some singular situations, a ``reasonable''
definition of asymptotic optimality of tests is still an open question. That
is why we use numerical simulations to compare the tests in \cite{DKY-2}.

We assume that the following \textit{Regularity conditions} are satisfied.

\bigskip

\noindent\textbf{Smoothness.}
 \textit{The intensity function $\lambda \left(\vartheta ,t\right)$, $0\leq
   t\leq \tau $, of the observed Poisson process $X^n$ is two times
   continuously differentiable w.r.t.\ $\vartheta $, is separated from zero
   uniformly on $\vartheta \geq \vartheta _1$, and the Fisher information is
   positive:
$$
{\rm I}\left(\vartheta \right)=\int_{0}^{\tau }\frac{\dot\lambda
  \left(\vartheta ,t\right)^2}{\lambda \left(\vartheta ,t\right)}{\rm
  d}t,\qquad \inf_{\vartheta \in\Theta }{\rm I}\left(\vartheta \right) >0. 
$$
Here $\dot\lambda$ denotes the derivative of $\lambda$ w.r.t.\ $\vartheta $
and, at the point $\vartheta _1$, the derivative is from the right.}

\bigskip

\noindent\textbf{Distinguishability.}
\textit{For any $\nu >0$, we have
$$
\inf_{\vartheta_* \in \Theta }\inf_{\left|\vartheta -\vartheta _*\right|>\nu
}\left\|\sqrt{\lambda \left(\vartheta ,\cdot \right)}-\sqrt{\lambda
  \left(\vartheta_1 ,\cdot \right)} \right\|_{L^2}>0.
$$
Here
$$
\left\|h\left(\cdot \right)\right\|_{L^2}^2=\int_{0}^{\tau
}h\left(t\right)^2{\rm d}t.
$$}

\bigskip

In this case, the natural normalization function is $\varphi _n=n^{-1/2}$ and
the change of variables is $\vartheta =\vartheta _1+\frac{u}{\sqrt{n}}$.

The key propriety of statistical problems in the regular case is the
\textit{local asymptotic normality} (LAN) of the family of measures of
corresponding inhomogeneous Poisson processes at the point $\vartheta_1$.
This means that the normalized likelihood ratio
$$
\tilde Z_n\left(u\right)=L\left(\vartheta _1+ \frac{u}{\sqrt{n}},
\vartheta
 _1,X^n\right)
$$
admits the representation
$$
\tilde Z_n\left(u\right)=\exp\left\{u\,\widetilde{\Delta}
_n\left(\vartheta _1,X^n\right)-\frac{u^2}{2}\,{\rm
I}\left(\vartheta _1\right)+r_n \right\},
$$
where (using the central limit theorem) we have
\begin{align*}
\widetilde{\Delta} _n\left(\vartheta
_1,X^n\right)&=\frac{1}{\sqrt{n}}\sum_{j=1}^{n}\int_{0}^{\tau
}\frac{\dot\lambda \left(\vartheta _1,t\right)}{\lambda \left(\vartheta
  _1,t\right)}\left[{\rm d}X_j\left(t\right)-\lambda \left(\vartheta
  _1,t\right){\rm d}t \right]\\
&\Longrightarrow \widetilde{\Delta} \sim \mathcal{ N}\left(0,{\rm
  I}\left(\vartheta _1\right) \right)
\end{align*}
(convergence in distribution under $\vartheta _1$), and $
r_n=r_n\left(\vartheta _1,u,X^n\right)\overset{p}{\longrightarrow} 0$
(convergence in probability under $\vartheta _1$).  Moreover, the last
convergence is uniform on $0\leq u<K$ for any $K>0$.

Let us now briefly recall how this representation was obtained in
\cite{Kut77}. Denoting $\lambda_0=\lambda \left(\vartheta _1,t\right)$ and
$\lambda _u=\lambda \left(\vartheta _1+\frac{u}{\sqrt{n}},t\right)$, with the
help of the Taylor series expansion we can write
\begin{align*}
\ln Z_n\left(u\right)&=\sum_{j=1}^{n}\int_{0}^{\tau }\ln\frac{\lambda
  _u}{\lambda _0}\left[{\rm d}X_j\left(t\right)-\lambda _0{\rm d}t\right]-
n\int_{0}^{\tau }\left[\lambda _u-\lambda _0-\lambda _0\ln \frac{\lambda
    _u}{\lambda _0}\right] {\rm
  d}t\\ &=\frac{u}{\sqrt{n}}\sum_{j=1}^{n}\int_{0}^{\tau }\frac{\dot \lambda
  _0}{\lambda _0}\left[{\rm d}X_j\left(t\right)-\lambda _0{\rm
    d}t\right]-\frac{u^2}{2}\int_{0}^{\tau }\frac{\dot \lambda _0^2}{\lambda
  _0}{\rm d}t+r_n\\ &=u\widetilde{\Delta} _n\left(\vartheta
_1,X^n\right)-\frac{u^2}{2}{\rm I}\left(\vartheta _1\right)+r_n\Longrightarrow
\widetilde{\Delta} -\frac{u^2}{2}{\rm I}\left(\vartheta _1\right).
\end{align*}

In the sequel, we choose reparametrizations which lead to \textit{universal}
in some sense limits. For example, in the regular case, we put
$$
\varphi _n=\varphi _n\left(\vartheta _1\right)=\frac{1}{\sqrt{n{\rm
      I}\left(\vartheta _1\right)}}\,,\qquad
u\in\mathbb{U}_n^+=\bigl[0,\varphi _n^{-1}\left(b-\vartheta _1\right)\bigr).
$$
With such change of variables, we have
$$
Z_n\left(u\right)=L\left(\vartheta _1+ u\varphi_n, \vartheta
_1,X^n\right)=\exp\left\{u\,\Delta _n\left(\vartheta
_1,X^n\right)-\frac{u^2}{2}+r_n \right\},
$$
where
$$
\Delta _n\left(\vartheta _1,X^n\right)=\frac1{\sqrt{{\rm I}\left(\vartheta
    _1\right)}}\,\widetilde{\Delta }_n \Longrightarrow \Delta \sim \mathcal{
  N}\left(0,1\right).
$$

The LAN families have many remarkable properties and some of them will be used
below.

Let us remind here one general result which is valid in a more general
situation. We suppose only that the normalized likelihood ratio
$Z_n\left(u\right)$ converges to some limit $Z\left(u\right)$ in
distribution. Note that this is the case in all our regular and singular
problems. The following property allows us to calculate the distribution under
local alternative when we know the distribution under the null hypothesis.
Moreover, it gives an efficient algorithm for calculating power functions in
numerical simulations.

\begin{lemma}[Le Cam's Third Lemma]
Suppose that $(Z_n\left(u\right),$ $Y_n)$ converges in distribution under
$\vartheta _1$:
$$
\left(Z_n\left(u\right),Y_n\right)\Longrightarrow
\left(Z\left(u\right),Y\right).
$$
Then, for any bounded continuous function $g\left(\cdot \right)$, we have
\begin{equation*}
\Ex_{\vartheta _1+\varphi _n u} \left[g\left(Y_n\right)\right]\longrightarrow
\Ex \left[Z\left(u\right) g\left(Y\right)\right].
\end{equation*}
\end{lemma}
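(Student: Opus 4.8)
The plan is to establish the Le Cam Third Lemma via the standard change-of-measure argument, exploiting the fact that the $Z_n(u)$ are likelihood ratios. First I would observe that, by the very definition of $Z_n(u)$ as $L(\vartheta_1+\varphi_n u, X^n)/L(\vartheta_1, X^n)$, for any bounded measurable $h$ we have the exact identity
$$
\Ex_{\vartheta_1+\varphi_n u}\bigl[h(X^n)\bigr]=\Ex_{\vartheta_1}\bigl[Z_n(u)\,h(X^n)\bigr].
$$
Applying this with $h(X^n)=g(Y_n)$ reduces the claim to showing
$$
\Ex_{\vartheta_1}\bigl[Z_n(u)\,g(Y_n)\bigr]\longrightarrow \Ex\bigl[Z(u)\,g(Y)\bigr],
$$
i.e.\ convergence of the expectation of the product of the two coordinates. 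Since $(Z_n(u),Y_n)\Rightarrow(Z(u),Y)$ under $\vartheta_1$ by hypothesis, and the map $(z,y)\mapsto z\,g(y)$ is continuous, the continuous mapping theorem gives $Z_n(u)g(Y_n)\Rightarrow Z(u)g(Y)$. The only missing ingredient is uniform integrability of the sequence $\{Z_n(u)g(Y_n)\}_n$, which would upgrade convergence in distribution to convergence of expectations.

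The key step, and the main obstacle, is therefore this uniform integrability. Here I would use that $g$ is bounded, say $|g|\le C$, so it suffices to show $\{Z_n(u)\}_n$ is uniformly integrable under $\vartheta_1$. But each $Z_n(u)$ is a genuine likelihood ratio, hence $\Ex_{\vartheta_1}Z_n(u)=1$ for every $n$; combined with $Z_n(u)\ge 0$ and the weak convergence $Z_n(u)\Rightarrow Z(u)$, one gets $\Ex Z(u)\le\liminf\Ex_{\vartheta_1}Z_n(u)=1$ by Fatou. To get equality $\Ex Z(u)=1$ one can argue, as is standard, that the limit $Z(u)$ is itself a likelihood ratio for the limit experiment (in the LAN case $Z(u)=\exp\{u\Delta-u^2/2\}$ with $\Delta\sim\mathcal N(0,1)$, for which $\Ex Z(u)=1$ is a direct Gaussian computation). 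Then the convergence in distribution of the nonnegative $Z_n(u)$ together with convergence of their (constant, equal to $1$) expectations to $\Ex Z(u)=1$ implies, by Scheff\'e-type reasoning, that $\{Z_n(u)\}_n$ is uniformly integrable.

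Finally I would assemble the pieces: uniform integrability of $\{Z_n(u)\}$ plus boundedness of $g$ give uniform integrability of $\{Z_n(u)g(Y_n)\}$; this together with the distributional convergence $Z_n(u)g(Y_n)\Rightarrow Z(u)g(Y)$ yields $\Ex_{\vartheta_1}[Z_n(u)g(Y_n)]\to\Ex[Z(u)g(Y)]$; and the change-of-measure identity from the first step then gives $\Ex_{\vartheta_1+\varphi_n u}[g(Y_n)]\to\Ex[Z(u)g(Y)]$, which is the assertion. I expect the verification that $\Ex Z(u)=1$ (equivalently, that no mass escapes to infinity in the sequence $Z_n(u)$) to be the only delicate point; in the concrete settings of this paper it follows either from the explicit exponential form of the limit or from the contiguity that the regularity conditions guarantee, so I would state it as a short lemma or simply invoke the LAN structure already established above.
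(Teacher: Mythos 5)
Your argument is correct, but note that the paper itself gives no proof of this lemma at all: it simply refers the reader to Le Cam's book \cite{LC-Y}. What you have written is the standard self-contained proof, and all of its steps are sound in the present setting. The change-of-measure identity $\Ex_{\vartheta_1+\varphi_n u}[g(Y_n)]=\Ex_{\vartheta_1}[Z_n(u)g(Y_n)]$ is exact here because the paper assumes the measures $\Pb_\vartheta$ are mutually equivalent, so $Z_n(u)$ is a genuine Radon--Nikodym derivative with $\Ex_{\vartheta_1}Z_n(u)=1$; the continuous mapping theorem applied to $(z,y)\mapsto z\,g(y)$ is legitimate; and your uniform-integrability step is exactly the point where the general version of the lemma requires contiguity --- which here is supplied by the explicit computation $\Ex Z(u)=\Ex\exp\{u\Delta-u^2/2\}=1$ for $\Delta\sim\mathcal N(0,1)$, after which the Scheff\'e-type criterion (nonnegative variables converging in distribution with converging means are uniformly integrable) closes the gap. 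The one thing worth making explicit if you write this up is that the identity $\Ex_{\vartheta_1}Z_n(u)=1$ and the passage from weak convergence to convergence of expectations both genuinely use the equivalence of measures and the finiteness of $\Ex Z(u)$; in a setting where mass could escape (i.e.\ $\Ex Z(u)<1$) the conclusion would fail, so your proposed short lemma isolating $\Ex Z(u)=1$ is the right thing to state. Compared with simply citing Le Cam, your route buys a proof that works verbatim for all the limit experiments considered in this line of work (regular and singular alike), provided only that the limiting likelihood ratio integrates to one.
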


For the proof see \cite{LC-Y}.

In the regular case, the limit of $Z_n\left(\cdot\right)$ is the random
function
$$
Z\left(u\right)=\exp \left\{u\,\Delta -\frac{u^2}{2}\right\},\qquad
u\geq 0.
$$
So, for any fixed $u>0$, we have the convergence
$$
Z_n\left(u\right)\Longrightarrow Z\left(u\right).
$$
According to this lemma, we can write the following relations for the
characteristic function of $\Delta _n=\Delta _n\left(\vartheta _1,X^n\right)$:
\begin{align*}
\Ex_{\vartheta _1+\varphi_n u}e^{i\mu \Delta _n}\longrightarrow \Ex
Z\left(u\right)e^{i\mu\Delta }=e^{-\frac{u^2}{2}}\Ex e^{u\Delta +i\mu \Delta
}=e^{i\mu u -\frac{\mu ^2}{2}} =\Ex e^{i\mu \left(u + \Delta \right) },
\end{align*}
which yields the asymptotic distribution of the statistic $\Delta _n$ under
the alternative $\vartheta _1+\varphi_n u$:
$$
\Delta _n\left(\vartheta _1,X^n\right)\Longrightarrow u + \Delta\;\sim\; \mathcal{
  N}\left(u,1 \right).
$$

\section{Weak convergence}

All the tests considered in this paper are functionals of the normalized
likelihood ratio $Z_n\left(\cdot \right) $. For each of them, we have to
evaluate two quantities. The first one is the threshold, which guarantees the
desired asymptotic size of the test, and the second one is the limit power
function, which has to be calculated under alternative. Our study is based on
the weak convergence of the likelihood ratio $Z_n\left(\cdot \right)$ under
hypothesis (to calculate the threshold) and under alternative (to calculate
the limit power function). Note that the test statistics of all the tests are
continuous functionals of $Z_n\left(\cdot \right)$.  That is why the weak
convergence of $Z_n\left(\cdot \right)$ allows us to obtain the limit
distributions of these statistics.

We denote $\Pb_\vartheta$ the distribution that the observed inhomogeneous
Poisson processes $X^n$ induce on the measurable space of their
realizations. The measures in the family $\left\{\Pb_\vartheta,\ \vartheta
\geq \vartheta _1\right\}$ are equivalent, and the normalized likelihood ratio
is
\begin{align*}
\ln Z_n\left(u\right)&=\sum_{j=1}^{n}\int_{0}^{\tau }\ln\frac{\lambda
  \left(\vartheta_1 +\varphi _n\left(\vartheta_1\right)u,t\right)}{\lambda
  \left(\vartheta_1 ,y\right)}{\rm d}X_j\left(t\right)\\
&\quad -n\int_{0}^{\tau }\left[{\lambda \left(\vartheta_1 +\varphi
    _n\left(\vartheta_1 \right)u,t\right)}-{\lambda \left(\vartheta_1
    ,t\right)}\right]{\rm d}t ,
\end{align*} 
where $u\in\mathbb{U}_n^+=\bigl[0, \varphi _n^{-1} \left(b-\vartheta_1 \right)
  \bigr)$.  We define $Z_n\left(u\right)$ to be linearly decreasing to zero on
  the interval $\bigl[ \varphi _n^{-1} \left(b-\vartheta_1 \right), \varphi
    _n^{-1} \left(b-\vartheta_1 \right)+1 \bigr]$ and we put
  $Z_n\left(u\right)=0$ for $u> \varphi _n^{-1} \left(b-\vartheta_1
  \right)+1$. Now the random function $Z_n\left(u\right)$ is defined
  on~$\mathbb{R}_+$ and belongs to the space
  $\mathscr{C}_0\left(\mathbb{R}_+\right)$ of continuous on $\mathbb{R}_+$
  functions such that $z\left(u\right)\rightarrow 0$ as $u\rightarrow
  \infty$. Introduce the uniform metric in this space and denote~$\mathcal{B}$
  the corresponding Borel sigma-algebra. The next theorem describes the weak
  convergence under the alternative $\vartheta =\vartheta _1+\varphi _nu_*$
  (with fixed $u_*>0$) of the stochastic process
  $(Z_n\left(u\right),\ u\in\mathbb{R}_+)$ to the process
$$
Z\left(u,u_*\right)=\exp\left\{u\Delta+u u_* -\frac{u^2}{2}\right\},\quad u\in
\mathbb{R},
$$
in the measurable space $\left(\mathscr{ C}_0\left(\mathbb{R}_+\right),
\mathcal{B}\right)$.  Note that in \cite{Kut98} this theorem was proved for a
fixed true value $\vartheta $.  In the hypothesis testing problems considered
here, we need this convergence both under hypothesis $\mathscr{H}_1$, that is,
for fixed true value $\vartheta =\vartheta _1$ ($u_*=0$), and under
alternative $\mathscr{H}_2$ with ``moving'' true value
$\vartheta=\vartheta_{u_*}=\vartheta_1+\varphi _nu_*$.

\begin{theorem}
\label{T1}
Let us suppose that the Regularity conditions are fulfilled.  Then, under
alternative $\vartheta_{u_*}$, we have the weak convergence of the stochastic
process $Z_n=\left(Z_n\left(u\right),\ u\geq 0\right)$ to
$Z=\left(Z\left(u,u_*\right),\ u\geq 0\right)$.
\end{theorem}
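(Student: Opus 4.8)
The plan is to establish weak convergence in $\mathscr{C}_0(\mathbb{R}_+)$ by the standard three-step program for stochastic processes: (i) convergence of finite-dimensional distributions, (ii) tightness (via a moment bound on increments together with control of the tails), and (iii) identification of the limit. The first ingredient is already essentially in hand: the excerpt records that $\ln Z_n(u)$ admits the LAN expansion $u\Delta_n - u^2/2 + r_n$, and that under the alternative $\vartheta_{u_*}=\vartheta_1+\varphi_n u_*$ Le Cam's Third Lemma gives $\Delta_n \Longrightarrow u_*+\Delta$ with $\Delta\sim\mathcal N(0,1)$. Hence for any finite collection $u_1,\dots,u_k$ the vector $(\ln Z_n(u_1),\dots,\ln Z_n(u_k))$ converges to $(u_i(\Delta+u_*)-u_i^2/2)_{i\le k}$, which is exactly $(\ln Z(u_i,u_*))_{i\le k}$. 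By the continuous mapping theorem the same holds for $Z_n$ itself. The uniformity of $r_n\to 0$ on compacts, stated in the excerpt, is what makes this clean; I would just invoke it.

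For tightness I would work with the square root $\zeta_n(u)=Z_n(u)^{1/2}$, as is traditional for likelihood ratios of Poisson (and i.i.d.) models, because its increments are easy to bound. The key estimate is the Poisson analogue of the Ibragimov–Has'minskii inequalities: there exist constants $C>0$ and (since we are in the regular case) one can obtain
\begin{align*}
\Ex_{\vartheta_{u_*}}\bigl|Z_n(u_2)^{1/2}-Z_n(u_1)^{1/2}\bigr|^2 &\le C\,|u_2-u_1|^2,\\
\Ex_{\vartheta_{u_*}}Z_n(u)^{1/2} &\le e^{-c\,u^2}
\end{align*}
for $u$ in the natural range, uniformly in $n$; these follow from the Hellinger-type computations for Poisson processes (the exponent in the second bound comes from the positive lower bound on the Fisher information together with the Distinguishability condition, which guarantees the Hellinger distance does not degenerate for large $u$ either). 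The quadratic bound on increments gives the modulus-of-continuity control needed for tightness in $\mathscr C(\mathbb{R}_+)$, and the Gaussian-type tail bound, together with the explicit linear truncation of $Z_n$ on $[\varphi_n^{-1}(b-\vartheta_1),\varphi_n^{-1}(b-\vartheta_1)+1]$ built into the definition, controls the behavior near infinity and places the limit in $\mathscr C_0(\mathbb{R}_+)$ rather than merely $\mathscr C(\mathbb{R}_+)$. Combining (i) and (ii) yields weak convergence of $Z_n$ to some limit process; by step (i) that limit must be $Z(\cdot,u_*)$, proving the theorem.

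I would reduce most of the analytic labor by citing \cite{Kut98}, where exactly this convergence is proved for a \emph{fixed} true parameter value, and then explaining the only genuinely new point: here the true value $\vartheta=\vartheta_{u_*}=\vartheta_1+\varphi_n u_*$ moves with $n$. The way to handle this is contiguity — the measures $\Pb_{\vartheta_{u_*}}$ and $\Pb_{\vartheta_1}$ are mutually contiguous along the sequence, because $Z_n(u_*)$ converges to $Z(u_*,0)=\exp\{u_*\Delta - u_*^2/2\}$ which is a.s.\ positive with expectation $1$ (Le Cam's first lemma). Consequently tightness under $\Pb_{\vartheta_1}$, which is what \cite{Kut98} effectively supplies, transfers to tightness under $\Pb_{\vartheta_{u_*}}$; and the finite-dimensional limits under the moving true value are obtained from those under $\vartheta_1$ precisely through Le Cam's Third Lemma, as already displayed in the excerpt for the marginal of $\Delta_n$. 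The main obstacle is therefore not the fixed-parameter estimates (which are classical and may be quoted) but making the contiguity transfer rigorous and checking that the uniform lower bound on the Fisher information and the Distinguishability condition are exactly what is needed for the moment and tail bounds to hold \emph{uniformly in $n$} when the base measure itself depends on $n$; this is the step I would write out most carefully.
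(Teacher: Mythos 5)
Your three-step program (finite-dimensional convergence, a quadratic bound on the increments of $Z_n^{1/2}$, a Gaussian-type tail bound) is exactly the Ibragimov--Has'minskii criterion that the paper invokes, so the skeleton matches. Where you genuinely diverge is in how the moving true value $\vartheta_{u_*}=\vartheta_1+\varphi_n u_*$ is handled: you propose proving everything under $\Pb_{\vartheta_1}$ (quoting \cite{Kut98}) and transferring by contiguity, whereas the paper recenters the likelihood ratio at the true value by writing
$$
Z_n\left(u\right)=L\left(\vartheta_1+u_*\varphi_n,\vartheta_1,X^n\right)\,
Z_n\left(u,u_*\right),\qquad
Z_n\left(u,u_*\right)=L\left(\vartheta_{u_*}+(u-u_*)\varphi_n,\vartheta_{u_*},X^n\right),
$$
and then verifies the two moment conditions directly for $Z_n\left(u,u_*\right)$ under $\Pb_{\vartheta_{u_*}}$ via the Hellinger computations of \cite[Lemma 1.1.5]{Kut98}, with uniformity in the base point coming from $\inf_\vartheta {\rm I}\left(\vartheta\right)>0$ and from the Distinguishability condition. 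The recentering buys two things that your contiguity route does not deliver for free. First, your stated tail bound $\Ex_{\vartheta_{u_*}}Z_n\left(u\right)^{1/2}\leq e^{-cu^2}$ is false as written: under the alternative the process $Z_n$ peaks near $u=u_*$, and indeed $\Ex_{\vartheta_{u_*}}Z_n\left(u_*\right)^{1/2}=\Ex_{\vartheta_1}Z_n\left(u_*\right)^{3/2}\approx e^{3u_*^2/8}>1$. The correct exponent is $-c\left|u-u_*\right|^2$, and the natural object to which the Hellinger affinity bound applies is precisely the recentered ratio $Z_n\left(u,u_*\right)$, since $\Ex_{\vartheta_{u_*}}Z_n^{1/2}\left(u,u_*\right)$ \emph{is} a Hellinger affinity while $\Ex_{\vartheta_{u_*}}Z_n^{1/2}\left(u\right)$ is not. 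Second, contiguity transfers only qualitative statements about sequences of events whose probabilities tend to zero; the Ibragimov--Has'minskii conditions are quantitative bounds that must hold uniformly in $u\in\mathbb{U}_n^+$ and in $n$, and contiguity provides no rate. A diagonal argument can rescue plain uniform tightness, but recovering the uniform-in-$u$ estimates needed for convergence in $\mathscr{C}_0\left(\mathbb{R}_+\right)$ this way is considerably more delicate than you suggest, and it is exactly the difficulty the paper's factorization is designed to bypass. If you adopt the recentering (and keep Le Cam's Third Lemma, or the paper's direct computation, for the finite-dimensional distributions), your argument becomes essentially the paper's proof.
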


According to ~\cite[Theorem 1.10.1]{IH81}, to prove this theorem it is
sufficient to verify the following three properties of the process
$Z_n\left(\cdot \right)$.

\begin{enumerate}
\item The finite-dimensional distributions of $Z_n\left(\cdot \right)$
  converge, under alternative $\vartheta_{u_*}$, to the finite-dimensional
  distributions of $Z\left(\cdot ,u_*\right)$.
\item The inequality
$$
\Ex_{\vartheta_{u_*}}\left|Z_n^{1/2}(u_2)-Z_n^{1/2}(u_1)\right|^2\leq
C\left|u_2-u_1\right|^2
$$
holds for every $u_1,u_2\in\mathbb{U}_n^+$ and some constant $C>0$.
\item There exists $d>0$, such that for some $n_0>0$ and all $n\geq n_0$ we
  have the estimate
$$
\Pb_{\vartheta_{u_*}}\left\{Z_n(u)>e^{-d\left|u-u_*\right|^2}\right\}\leq
e^{-d\left|u-u_*\right|^2}.
$$
\end{enumerate}

Let us rewrite the random function $Z_n\left(\cdot\right)$ as follows:
\begin{align*}
Z_n\left(u\right)&=L\left({\vartheta_1+u\varphi_n},\vartheta_1,X^n\right)\\
& ={L\left({\vartheta_1+u_*\varphi_n},\vartheta_1,X^n\right)}\;
{L\left({\vartheta_1+u\varphi_n},\vartheta_1+u_*\varphi_n,X^n\right)}.
\end{align*}
For the first term we have
$$
L\left({\vartheta_1+u_*\varphi_n},\vartheta_1,X^n\right)
=L\left({\vartheta_{u_*}-u_*\varphi_n},\vartheta_{u_*},X^n\right)^{-1}\Longrightarrow
\exp\left\{u_*\Delta +\frac{u_*^2}{2}\right\}.
$$
Therefore we only need to check the conditions 2--3 for the term
$$
Z_n\left(u,u_*\right)=
L\left({\vartheta_1+u\varphi_n},\vartheta_1+u_*\varphi_n,X^n\right)=
L\left(\vartheta_{u_*}+(u-u_*)\varphi_n,\vartheta_{u_*},X^n\right).
$$

\begin{lemma}
The finite-dimensional distributions of $Z_n\left(\cdot \right)$ converge,
under alternative $\vartheta_{u_*}$, to the finite-dimensional distributions
of $Z\left(\cdot ,u_*\right)$.
\end{lemma}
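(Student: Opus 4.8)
The goal is to establish the convergence of finite-dimensional distributions of $Z_n(\cdot)$ under the alternative $\vartheta_{u_*}$. By the decomposition just given, $Z_n(u) = L(\vartheta_{u_*}-u_*\varphi_n,\vartheta_{u_*},X^n)^{-1}\cdot Z_n(u,u_*)$, where $Z_n(u,u_*) = L(\vartheta_{u_*}+(u-u_*)\varphi_n,\vartheta_{u_*},X^n)$. So the plan is to work with the \emph{centered} log-likelihood ratio $\ln Z_n(u,u_*)$ viewed as the normalized likelihood ratio of a LAN family at the true point $\vartheta_{u_*}$, with local parameter $u-u_*$. Since $\vartheta_{u_*}\to\vartheta_1$ and the Smoothness condition holds uniformly on $\vartheta\geq\vartheta_1$ (separation from zero, positive Fisher information bounded away from $0$, two continuous derivatives), the same Taylor expansion recalled after Kutoyants~\cite{Kut77} applies here, giving
\begin{align*}
\ln Z_n(u,u_*) = (u-u_*)\,\Delta_n(\vartheta_{u_*},X^n) - \frac{(u-u_*)^2}{2} + r_n,
\end{align*}
where $r_n\overset{p}{\to}0$ under $\vartheta_{u_*}$, uniformly over bounded local parameters, and $\Delta_n(\vartheta_{u_*},X^n)$ is the normalized score at $\vartheta_{u_*}$.

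The key step is then to identify the joint limit, under $\vartheta_{u_*}$, of the pair consisting of $\Delta_n(\vartheta_{u_*},X^n)$ and the first-term factor $L(\vartheta_{u_*}-u_*\varphi_n,\vartheta_{u_*},X^n)^{-1}$. Here I would use Le Cam's Third Lemma (the Lemma stated above). First, under $\vartheta_{u_*}$, the stochastic integral $\Delta_n(\vartheta_{u_*},X^n)$ is a sum of $n$ i.i.d.\ centered terms and a CLT gives $\Delta_n(\vartheta_{u_*},X^n)\Rightarrow\Delta\sim\mathcal N(0,1)$; moreover the same argument as in the excerpt shows $L(\vartheta_{u_*}-u_*\varphi_n,\vartheta_{u_*},X^n)^{-1}\Rightarrow\exp\{u_*\Delta+u_*^2/2\}$, and jointly $(\Delta_n(\vartheta_{u_*},X^n),\ln L(\vartheta_{u_*}-u_*\varphi_n,\vartheta_{u_*},X^n)^{-1})\Rightarrow(\Delta, u_*\Delta+u_*^2/2)$ — these are jointly asymptotically Gaussian because both are (asymptotically) linear functionals of the same underlying i.i.d.\ array, so their joint limit is determined by covariances, which a direct computation of $\int_0^\tau \frac{\dot\lambda_0^2}{\lambda_0}\,\mathrm dt$-type integrals yields. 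Combining, for any fixed $0\le u_1<\dots<u_k$, the vector $(\ln Z_n(u_1),\dots,\ln Z_n(u_k))$ is a continuous (affine) image of the pair above plus the asymptotically negligible $r_n$'s, hence converges in distribution to $(u_j\Delta + u_ju_* - u_j^2/2)_{j=1}^k$, which is exactly the finite-dimensional distribution of $\ln Z(\cdot,u_*)$. Applying the continuous map $\exp$ componentwise finishes the identification.

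The main obstacle is the \emph{joint} convergence and the correct bookkeeping of the covariance between the score $\Delta_n(\vartheta_{u_*},X^n)$ and the extra factor coming from the change of true value; one must check that the limiting $\Delta$ appearing in $\exp\{u_*\Delta+u_*^2/2\}$ is the \emph{same} Gaussian variable as the limit of $\Delta_n$, with covariance $1$ after normalization — this is what produces the $uu_*$ cross term in $Z(u,u_*)$ rather than an independent shift. The cleanest route is to expand both $\ln Z_n(u,u_*)$ and $-\ln L(\vartheta_{u_*}-u_*\varphi_n,\vartheta_{u_*},X^n)$ around $\vartheta_{u_*}$ simultaneously, express them through the \emph{same} normalized score $\Delta_n(\vartheta_{u_*},X^n)$ up to $o_p(1)$, and invoke the scalar CLT once for $\Delta_n(\vartheta_{u_*},X^n)$; continuity of $\exp$ and Slutsky then deliver the result. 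Uniformity of the Smoothness condition on $\vartheta\ge\vartheta_1$ is what guarantees the remainder estimates hold uniformly along the moving sequence $\vartheta_{u_*}\to\vartheta_1$, so that nothing degrades as $n\to\infty$.
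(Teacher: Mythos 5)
Your proposal is correct and follows essentially the same route as the paper: factor $Z_n(u)$ as $L(\vartheta_{u_*}-u_*\varphi_n,\vartheta_{u_*},X^n)^{-1}\,Z_n(u,u_*)$, identify the limit of each factor via the LAN expansion at the moving point $\vartheta_{u_*}$, and multiply. The paper delegates the one nontrivial point --- that the same Gaussian $\Delta$ appears in both factors, producing the cross term $uu_*$ --- to the reference \cite{Kut98}, whereas you correctly supply it by expressing both factors through the single normalized score $\Delta_n(\vartheta_{u_*},X^n)$ up to $o_p(1)$ and invoking one scalar CLT.
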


\begin{proof}
The limit process for $Z_n\left(\cdot,u_*\right) $ is
$$
\exp\left\{\left(u-u_*\right)\Delta
-\frac{\left(u-u_*\right)^2}{2}\right\},\quad u\in\mathbb{R}_+.
$$
Hence
$$
Z_n\left(u\right)\Longrightarrow \exp\left\{u_*\Delta +\frac{u_*^2}{2}\right\}
\exp\left\{\left(u-u_*\right)\Delta
-\frac{\left(u-u_*\right)^2}{2}\right\}=Z\left(u,u_*\right).
$$
For the details see, for example, \cite{Kut98}.
\end{proof}

\begin{lemma}
Let the Regularity conditions be fulfilled. Then there exists a constant
$C>0$, such that
$$
\Ex_{\vartheta_{u_*}}\bigl|{Z}_n^{1/2}(u_1,u_*)-{Z}_n^{1/2}(u_2,u_*)\bigr|^2\leq
C\left|u_1-u_2\right|^2
$$
for all $u_*,u_1,u_2\in\mathbb{U}_n^+$ and sufficiently large values of $n$.
\end{lemma}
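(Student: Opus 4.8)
The plan is to reduce the estimate to a moment computation for the normalized likelihood ratio of a Poisson experiment and to exploit the exponential form of the likelihood. Writing $\lambda_1 = \lambda(\vartheta_{u_*}+(u_1-u_*)\varphi_n, t)$ and $\lambda_2 = \lambda(\vartheta_{u_*}+(u_2-u_*)\varphi_n, t)$, and recalling that under $\Pb_{\vartheta_{u_*}}$ the processes $X_j$ are i.i.d.\ Poisson with intensity $\lambda_* = \lambda(\vartheta_{u_*},\cdot)$, I would first use independence across $j$ to factor the expectation. For a single Poisson process with intensity $\mu(\cdot)$, one has the classical identity
$$
\Ex_\mu \exp\left\{\int_0^\tau \ln\frac{\nu_1}{\mu}\,{\rm d}X(t)\right\} = \exp\left\{\int_0^\tau (\nu_1 - \mu)\,{\rm d}t\right\},
$$
and more generally the Laplace-type formula for Hellinger-like integrals. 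Using this, $\Ex_{\vartheta_{u_*}}\bigl[Z_n^{1/2}(u_1,u_*)\,Z_n^{1/2}(u_2,u_*)\bigr]$ becomes $\exp\{-\,n\,H_n\}$ where $H_n = \int_0^\tau \bigl(\tfrac{\lambda_1+\lambda_2}{2} - \sqrt{\lambda_1\lambda_2}\bigr)\,{\rm d}t = \tfrac12\|\sqrt{\lambda_1}-\sqrt{\lambda_2}\|_{L^2}^2$, the squared Hellinger distance between the two intensities. Expanding the left-hand side of the desired inequality,
$$
\Ex_{\vartheta_{u_*}}\bigl|Z_n^{1/2}(u_1,u_*)-Z_n^{1/2}(u_2,u_*)\bigr|^2 = \Ex_{\vartheta_{u_*}} Z_n(u_1,u_*) + \Ex_{\vartheta_{u_*}} Z_n(u_2,u_*) - 2\,\Ex_{\vartheta_{u_*}}\bigl[Z_n^{1/2}(u_1,u_*)Z_n^{1/2}(u_2,u_*)\bigr],
$$
and since $\Ex_{\vartheta_{u_*}} Z_n(u_i,u_*) = 1$ (the likelihood ratio has expectation one under the base measure), this collapses to $2\bigl(1 - e^{-nH_n}\bigr) \le 2\,n\,H_n = n\,\|\sqrt{\lambda_1}-\sqrt{\lambda_2}\|_{L^2}^2$.

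It therefore remains to bound $n\,\|\sqrt{\lambda_1}-\sqrt{\lambda_2}\|_{L^2}^2$ by $C|u_1-u_2|^2$. Here I would write $\sqrt{\lambda_1(t)}-\sqrt{\lambda_2(t)} = (u_1-u_2)\varphi_n \int_0^1 \frac{\dot\lambda(\vartheta_{u_*}+(su_1+(1-s)u_2-u_*)\varphi_n,\,t)}{2\sqrt{\lambda(\vartheta_{u_*}+(su_1+(1-s)u_2-u_*)\varphi_n,\,t)}}\,{\rm d}s$ by the mean value theorem (valid since $\lambda$ is $C^2$ and separated from zero). Squaring, integrating in $t$, applying Cauchy–Schwarz in $s$, and using $\varphi_n^{-2} = n\,{\rm I}(\vartheta_1)$ gives
$$
n\,\|\sqrt{\lambda_1}-\sqrt{\lambda_2}\|_{L^2}^2 \le \frac{|u_1-u_2|^2}{{\rm I}(\vartheta_1)} \sup_{\vartheta \ge \vartheta_1}\int_0^\tau \frac{\dot\lambda(\vartheta,t)^2}{4\,\lambda(\vartheta,t)}\,{\rm d}t = \frac{|u_1-u_2|^2}{4\,{\rm I}(\vartheta_1)}\,\sup_{\vartheta\ge\vartheta_1}{\rm I}(\vartheta),
$$
so that $C = \tfrac14\,{\rm I}(\vartheta_1)^{-1}\sup_{\vartheta}{\rm I}(\vartheta)$ works, provided this supremum is finite. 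The finiteness follows from the Smoothness condition: $\dot\lambda$ is continuous hence bounded on the compact $[\vartheta_1,b]\times[0,\tau]$ (extending by continuity to the right endpoint, or using $\Theta=[\vartheta_1,b)$ with $b<\infty$ together with continuity of the second derivative to get a uniform bound near $b$), and $\lambda$ is separated from zero uniformly, while $\inf_\vartheta {\rm I}(\vartheta)>0$ controls the denominator.

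The main obstacle I anticipate is not the Hellinger computation, which is essentially a one-line consequence of the Poisson Laplace formula, but rather the justification that all the relevant quantities are uniform in $u_*$ and in $n$: the argument $\vartheta_{u_*}+(su_1+(1-s)u_2-u_*)\varphi_n$ ranges over an interval that must be kept inside $\Theta$ (or its closure), and one needs the bound on $\dot\lambda^2/\lambda$ to hold uniformly over all of $\vartheta\ge\vartheta_1$, not just on a fixed compact — this is exactly why the Smoothness condition is stated with ``separated from zero uniformly on $\vartheta\ge\vartheta_1$'' and with a two-sided Fisher information bound. A secondary technical point is the passage from $\mathbb{U}_n^+$ to the linearly-interpolated tail region $[\varphi_n^{-1}(b-\vartheta_1), \varphi_n^{-1}(b-\vartheta_1)+1]$ where $Z_n$ was extended by hand; there the increments of $Z_n^{1/2}$ are controlled directly by the linear decay, so the inequality holds trivially on that piece and one only needs to check it does not blow up at the junction, which is handled by splitting $|u_1-u_2|$ across the boundary and using the triangle inequality.
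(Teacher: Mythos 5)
Your proof is correct and follows essentially the same route as the paper: the key moment inequality $\Ex_{\vartheta_{u_*}}\bigl|Z_n^{1/2}(u_1,u_*)-Z_n^{1/2}(u_2,u_*)\bigr|^2\le n\int_0^\tau\bigl(\lambda^{1/2}(\vartheta_1+u_1\varphi_n,t)-\lambda^{1/2}(\vartheta_1+u_2\varphi_n,t)\bigr)^2\,\mathrm{d}t$, which the paper simply imports as Lemma~1.1.5 of Kutoyants (1998), is exactly what you rederive from the Poisson exponential formula together with $2(1-e^{-nH_n})\le 2nH_n$, and the remaining step (mean value theorem plus $n\varphi_n^2={\rm I}(\vartheta_1)^{-1}$, giving $C=\sup_{\vartheta}{\rm I}(\vartheta)/(4\,{\rm I}(\vartheta_1))$) coincides with the paper's computation. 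Your additional remarks on the uniform boundedness of ${\rm I}(\vartheta)$ and on the linearly interpolated tail of $Z_n$ are sound but not required for the lemma as stated, since $u_1,u_2$ range only over $\mathbb{U}_n^+$.
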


\begin{proof}
According to~\cite[Lemma 1.1.5]{Kut98}, we have:
\begin{align*}
\Ex_{\vartheta_{u_*}}&\bigl|{Z}_n^{1/2}(u_1,u_*)-{Z}_n^{1/2}(u_2,u_*)\bigr|^2\\
&\leq n\int_{0}^{\tau}
\left(\frac{\lambda^{1/2}(\vartheta_1+u_1\varphi_n,t)}{\lambda^{1/2}(\vartheta_1+u_*\varphi_n,t)}
-\frac{\lambda^{1/2}(\vartheta_1+u_2\varphi_n,t)}{\lambda^{1/2}(\vartheta_1+u_*\varphi_n,t)}\,
\right) ^2\,\lambda (\vartheta_1+u_*\varphi_n,t)\,\mathrm{d} t\\
&=n\int_{0}^{\tau}\bigl(\lambda^{1/2}(\vartheta_1+u_1\varphi_n,t)
-\lambda^{1/2}(\vartheta_1+u_2\varphi_n,t)\,\bigr)^2\,\mathrm{d} t\\
&= \frac{n}{4}\,\varphi_n^2\left(u_2-u_1\right)^2\int_{0}^{\tau}
\frac{\dot\lambda\left(\vartheta_v,t\right)^2}
     {\lambda\left(\vartheta_v,t\right)}\,\mathrm{d} t\leq C\left(u_2-u_1\right)^2,
\end{align*}
where $v$ is some intermediate point between $u_1$ and $u_2$.
\end{proof}

\begin{lemma}
Let the Regularity conditions be fulfilled. Then there exists a constant
$d>0$, such that
\begin{equation}
\label{c}
\Pb_{\vartheta_{u_*}}\left\{ Z_n(u,u_*)>e^{-d\left|u-u_*\right|^2}\right\}\leq
e^{-d\left|u-u_*\right|^2}
\end{equation}
for all $u_*,u\in\mathbb{U}_n^+$ and sufficiently large value of $n$.
\end{lemma}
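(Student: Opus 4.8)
The plan is to derive (\ref{c}) from Chebyshev's inequality applied to $Z_n(u,u_*)^{1/2}$: this reduces the statement to an exact computation of $\Ex_{\vartheta_{u_*}}Z_n(u,u_*)^{1/2}$ and then to a quadratic lower bound on the resulting Hellinger-type integral between the two Poisson intensities.

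First I would use that, under $\Pb_{\vartheta_{u_*}}$, the quantity $Z_n(u,u_*)=L(\vartheta_1+u\varphi_n,\vartheta_{u_*},X^n)$ is the likelihood ratio of $n$ independent Poisson observations, so that a direct computation with the Poisson structure (the affinity between Poisson laws on $[0,\tau]$) gives the explicit identity
$$
\Ex_{\vartheta_{u_*}}Z_n(u,u_*)^{1/2}
=\exp\left\{-\frac{n}{2}\int_{0}^{\tau}\left(\sqrt{\lambda(\vartheta_1+u\varphi_n,t)}-\sqrt{\lambda(\vartheta_1+u_*\varphi_n,t)}\,\right)^{2}{\rm d}t\right\}
=:e^{-\rho_n(u,u_*)/2},
$$
with $\rho_n(u,u_*)\ge 0$. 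Chebyshev's inequality then yields
$$
\Pb_{\vartheta_{u_*}}\left\{Z_n(u,u_*)>e^{-d\left|u-u_*\right|^2}\right\}
\le e^{\frac{d}{2}\left|u-u_*\right|^2}\,\Ex_{\vartheta_{u_*}}Z_n(u,u_*)^{1/2}
=e^{\frac{d}{2}\left|u-u_*\right|^2-\frac{1}{2}\rho_n(u,u_*)},
$$
so (\ref{c}) holds as soon as $\rho_n(u,u_*)\ge 3d\left|u-u_*\right|^2$ for all $u,u_*\in\mathbb{U}_n^+$, with some $d>0$ independent of $n$.

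To prove this inequality I would fix a small $\nu>0$ and split according to the size of the parameter increment $h:=(u-u_*)\varphi_n$, noting that $\vartheta_{u_*}$ and $\vartheta_1+u\varphi_n=\vartheta_{u_*}+h$ both lie in $\Theta=[\vartheta_1,b)$ and that $\left|h\right|<b-\vartheta_1$. If $\left|h\right|\le\nu$, I would expand $\vartheta\mapsto\sqrt{\lambda(\vartheta,t)}$ to second order about $\vartheta_{u_*}$; this is legitimate because, by the Smoothness condition, $\lambda$ is $C^2$ in $\vartheta$ and bounded away from zero, so $\partial^2_\vartheta\sqrt{\lambda(\vartheta,t)}$ is bounded by a constant $C_2$ uniformly in $\vartheta\in\Theta$ and $t\in[0,\tau]$. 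Using $(a+b)^2\ge\frac{1}{2}a^2-b^2$ and integrating, the Hellinger integral is then at least $\frac{h^2}{8}\,{\rm I}(\vartheta_{u_*})-\frac{C_2^2\tau}{4}h^4$, which is bounded below by $\frac{h^2}{16}\inf_{\vartheta\in\Theta}{\rm I}(\vartheta)$ as soon as $\nu$ is chosen small enough (depending only on $C_2$, $\tau$ and $\inf_{\vartheta}{\rm I}(\vartheta)>0$); multiplying by $n$ and using $n\varphi_n^2={\rm I}(\vartheta_1)^{-1}$ gives $\rho_n(u,u_*)\ge c_1\left|u-u_*\right|^2$ with $c_1=\inf_{\vartheta}{\rm I}(\vartheta)/(16\,{\rm I}(\vartheta_1))>0$. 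If $\left|h\right|>\nu$, the two parameter values are at distance larger than $\nu$, so by the Distinguishability condition the Hellinger integral is bounded below by a constant $g(\nu)>0$ that does not depend on $n$, $u$, $u_*$; since $\left|u-u_*\right|^2=h^2 n\,{\rm I}(\vartheta_1)<(b-\vartheta_1)^2 n\,{\rm I}(\vartheta_1)$, this yields $\rho_n(u,u_*)\ge n\,g(\nu)\ge c_2\left|u-u_*\right|^2$ with $c_2=g(\nu)/\bigl((b-\vartheta_1)^2\,{\rm I}(\vartheta_1)\bigr)>0$. Taking $d=\frac{1}{3}\min(c_1,c_2)$ completes the proof.

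The step I expect to be the main obstacle is the regime $\left|h\right|\le\nu$: a crude first-order expansion of $\sqrt{\lambda(\cdot,t)}$ leaves an intermediate evaluation point depending on $t$, so one cannot read the lower bound off the inequality $\inf_{\vartheta}{\rm I}(\vartheta)>0$ for the Fisher information directly; carrying the expansion to second order with a uniformly bounded remainder — which is what forces $\nu$ to be taken small — is precisely what lets the ``local'' (Fisher information) and ``global'' (Distinguishability) regimes be combined into a single quadratic bound valid over all of $\mathbb{U}_n^+$. Everything else is routine.
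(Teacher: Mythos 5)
Your proof is correct and follows essentially the same route as the paper: Markov's inequality applied to $Z_n^{1/2}(u,u_*)$, the exact Hellinger-type identity for $\Ex_{\vartheta_{u_*}}Z_n^{1/2}(u,u_*)$, and a split into a local regime controlled by the Fisher information and a global regime controlled by the Distinguishability condition, with the same bookkeeping for the final constant $d$. The only difference is that in the local regime you expand $\sqrt{\lambda(\cdot,t)}$ to second order with a uniformly bounded remainder, whereas the paper uses a first-order expansion with an intermediate point $\vartheta_v$ and the bound ${\rm I}(\vartheta_v)\geq\frac12{\rm I}(\vartheta_1)$; your variant is in fact slightly more careful about the $t$-dependence of that intermediate point.
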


\begin{proof}
Using the Markov inequality, we get
$$
\Pb_{\vartheta_{u_*}}\left\{{Z}_n(u,u_*)>e^{-d\left|u-u_*\right|^2}\right\}\leq
e^{\frac12 d\left|u-u_*\right|^2}\Ex_{\vartheta_{u_*}}{Z}_n^{1/2}(u,u_*).
$$
According to~\cite[Lemma 1.1.5]{Kut98}, we have
\begin{align*}
\Ex_{\vartheta_{u_*}}&{Z}_n^{1/2}(u,u_*)\\
&=\exp\biggl\{-\frac{1}{2}\,n\int_0^{\tau}
  \Bigl(\frac{\lambda^{1/2}(\vartheta_1+u\varphi_n,t)}
       {\lambda^{1/2}(\vartheta_1+u_*\varphi_n,t)}
       -1\,\Bigr)^2\,\lambda(\vartheta_1+u_*\varphi_n,t)\,\mathrm{d} t\biggr\}\\
&=\exp\biggl\{-\frac{1}{2}\,n\int_0^{\tau}\Bigl(\lambda^{1/2}
       \bigl(\vartheta_1+u\varphi_n,t\bigr)
       -\lambda^{1/2}(\vartheta_1+u_*\varphi_n,t)\,\Bigr)^2\,\mathrm{d} t\biggr\}.
\end{align*}
Using the Taylor expansion we get
$$
\lambda^{1/2}\bigl(\vartheta_1+u\varphi_n,t\bigr)=
\lambda^{1/2}\bigl(\vartheta_1+u_*\varphi_n,t\bigr) +\frac{\varphi_n
  (u-u_*)}2\,\frac{\dot \lambda\left(\vartheta_v,t\right)}
       {\lambda^{1/2}\left(\vartheta_v,t\right)}\,,
$$
where $v$ is some intermediate point between $u_*$ and $u$. Hence, for
sufficiently large $n$ providing $\varphi_n\left|u-u_* \right|\leq \gamma $,
we have the inequality ${\rm I}\left(\vartheta_v\right)\geq \frac12{\rm
  I}\left(\vartheta _1\right)$, and we obtain
\begin{equation}
\label{a}
\begin{aligned}
\Ex_{\vartheta_{u_*}}{Z}_n^{1/2}(u,u_*)&\leq \exp\Biggl\{-\frac{1}{8{\rm
    I}\left(\vartheta _1\right)}\,\left|u-u_*\right|^2\,{\rm
  I}\left(\vartheta_v\right)\Biggr\}\\
&\leq\exp\Biggl\{-\frac{\left|u-u_*\right|^2}{16}\,\Biggr\}. 
\end{aligned}
\end{equation}
By Distinguishability condition, we can write
$$
g(\gamma)=\inf_{\varphi
  _n\left|u-u_*\right|>\gamma}\int_0^{\tau}\Bigl(\lambda^{1/2}\bigl(\vartheta_1+u\varphi_n,t\bigr)
-\lambda^{1/2}(\vartheta_1+u_*\varphi_n,t)\,\Bigr)^2\,\mathrm{d} t>0,
$$
and hence
$$
\int_0^{\tau}\Bigl(\lambda^{1/2}\bigl(\vartheta_1+u\varphi_n,t\bigr)
-\lambda^{1/2}(\vartheta_1+u_*\varphi_n,t)\,\Bigr)^2\,\mathrm{d} t\geq g(\gamma)\geq
g(\gamma)\frac{\varphi_n^2(u-u_*)^2}{\left(b-\vartheta_1\right)^2}
$$
and
\begin{equation}
\label{b}
\Ex_{\vartheta_{u_*}}{Z}_n^{1/2}(u,u_*)\leq
\exp\Biggl\{-\frac{g(\gamma)\left|u-u_*\right|^2}{2{\rm I}\left(\vartheta
  _1\right)\left(b-\vartheta_1\right)^2}\Biggr\}.
\end{equation}
So, putting
$$
d=\frac{2}{3}\min\left(\frac{1}{16}\,,\,\frac{g(\gamma)}{2{\rm
    I}\left(\vartheta _1\right)\left(b-\vartheta_1\right)^2}\right),
$$
the estimate \eqref{c} follows from \eqref{a} and \eqref{b}.
\end{proof}

The weak convergence of $Z_n\left(\cdot\right)$ now follows from \cite[Theorem
  1.10.1]{IH81}.

\section{Hypothesis testing}

In this section, we construct the Score Function test, the General Likelihood
Ratio test, the Wald test and two Bayes tests. For all these tests we describe
the choice of the thresholds and evaluate the limit power functions for local
alternatives.

\subsection{Score Function test}

Let us introduce the \textit{Score Function test} (SFT)
$$
\psi _n^\star\left(X^n\right)=\1_{\left\{\Delta _n\left(\vartheta
  _1,X^n\right)>z_\varepsilon \right\}},
$$
where $z_\varepsilon$ is the ($1-\varepsilon$)-quantile of the standard normal
distribution $\mathcal{ N}\left(0,1\right)$ and the statistic ${\Delta}
_n\left(\vartheta _1,X^n\right)$ is
$$
{\Delta} _n\left(\vartheta _1,X^n\right)=\frac{1}{\sqrt{n{\rm
      I}\left(\vartheta _1\right)}}\sum_{j=1}^{n}\int_{0}^{\tau
}\frac{\dot\lambda \left(\vartheta _1,t\right)}{\lambda \left(\vartheta
  _1,t\right)}\left[{\rm d}X_j\left(t\right)-\lambda \left(\vartheta
  _1,t\right){\rm d}t \right].
$$

The SFT has the following well-known properties (one can see, for example,
\cite[Theorem 13.3.3]{LR05} for the case of i.i.d.\ observations).

\begin{proposition}
The test $\psi _n^\star\left(X^n\right)\in \mathcal{ K}_\varepsilon$ and is
LAUMP. For its power function the following convergence hold:
$$
\beta\left(\psi _n^\star,u_*\right)\longrightarrow \beta
^\star\left(u_*\right)=\Pb\left(\Delta >z_\varepsilon -u_* \right),\quad
\Delta \sim \mathcal{ N}\left(0,1\right).
$$
\end{proposition}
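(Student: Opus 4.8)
The plan is to establish the three claims of the proposition in turn: (i) $\psi_n^\star \in \mathcal{K}_\varepsilon$, (ii) the stated convergence of the power function, and (iii) the LAUMP property. For (i), recall that under $\mathscr{H}_1$ (i.e.\ $\vartheta = \vartheta_1$) the LAN expansion gives $\Delta_n(\vartheta_1,X^n) \Rightarrow \Delta \sim \mathcal{N}(0,1)$ by the central limit theorem applied to the sum of i.i.d.\ stochastic integrals (this is exactly the convergence recalled in Section~2). Since $z_\varepsilon$ is the $(1-\varepsilon)$-quantile of the standard normal law, the distribution function of $\mathcal{N}(0,1)$ is continuous at $z_\varepsilon$, so
$$
\Ex_{\vartheta_1}\psi_n^\star = \Pb_{\vartheta_1}\bigl\{\Delta_n(\vartheta_1,X^n) > z_\varepsilon\bigr\} \longrightarrow \Pb\{\Delta > z_\varepsilon\} = \varepsilon,
$$
which is precisely the membership $\psi_n^\star \in \mathcal{K}_\varepsilon$.

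For (ii), I would use the asymptotic distribution of $\Delta_n$ under the local alternative, which was already derived in Section~2 via Le Cam's Third Lemma: under $\vartheta_1 + \varphi_n u_*$ one has $\Delta_n(\vartheta_1,X^n) \Rightarrow u_* + \Delta \sim \mathcal{N}(u_*,1)$. Then, since the map $x \mapsto \1_{\{x > z_\varepsilon\}}$ has its single discontinuity at $z_\varepsilon$, and the limit law $\mathcal{N}(u_*,1)$ assigns zero mass to the point $z_\varepsilon$, the continuous mapping theorem (or direct convergence of distribution functions) yields
$$
\beta(\psi_n^\star, u_*) = \Ex_{\vartheta_1+\varphi_n u_*}\psi_n^\star = \Pb_{\vartheta_1+\varphi_n u_*}\bigl\{\Delta_n > z_\varepsilon\bigr\} \longrightarrow \Pb\{u_* + \Delta > z_\varepsilon\} = \Pb\{\Delta > z_\varepsilon - u_*\} = \beta^\star(u_*).
$$

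The main obstacle is (iii), the LAUMP optimality, and here the argument is the standard reduction to the Neyman--Pearson bound for the limiting Gaussian shift experiment. Fix $K>0$ and let $\bar\psi_n \in \mathcal{K}_\varepsilon$ be arbitrary. The idea is that for each fixed $u \in (0,K]$ the testing problem $\mathscr{H}_1:\ \vartheta = \vartheta_1$ against the \emph{simple} alternative $\vartheta = \vartheta_1 + \varphi_n u$ has, by LAN, a limiting likelihood ratio $Z(u) = \exp\{u\Delta - u^2/2\}$; the Neyman--Pearson test for this limit experiment rejects when $\Delta > z_\varepsilon$ and achieves power $\Pb\{\Delta > z_\varepsilon - u\} = \beta^\star(u)$. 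One then shows that no test of asymptotic level $\varepsilon$ can asymptotically exceed this bound: writing $\beta(\bar\psi_n,u) = \Ex_{\vartheta_1+\varphi_n u}\bar\psi_n = \Ex_{\vartheta_1}[Z_n(u)\bar\psi_n]$ and using $Z_n(u) \Rightarrow Z(u)$ together with the constraint $\limsup_n \Ex_{\vartheta_1}\bar\psi_n \le \varepsilon$, a subsequence/weak-compactness argument (passing to a subsequence along which $\bar\psi_n$ and the relevant statistics converge jointly in distribution under $\vartheta_1$, then applying Fatou and the Neyman--Pearson lemma in the limit) gives $\limsup_n \beta(\bar\psi_n,u) \le \beta^\star(u)$ for each fixed $u$. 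Combined with $\beta(\psi_n^\star,u) \to \beta^\star(u)$ from step (ii), this yields $\liminf_n [\beta(\psi_n^\star,u) - \beta(\bar\psi_n,u)] \ge 0$ pointwise in $u$.

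The delicate point is upgrading this pointwise-in-$u$ statement to the \emph{uniform} infimum over $0 < u \le K$ required by the definition of LAUMP. The standard device is an equicontinuity argument: both $u \mapsto \beta(\psi_n^\star,u)$ and $u \mapsto \beta(\bar\psi_n,u)$ are, for large $n$, uniformly (in $n$) equicontinuous on $[0,K]$ — for $\psi_n^\star$ this follows from the explicit form and the smoothness of the Gaussian distribution function, and for a general $\bar\psi_n$ it follows from the $L^2$-Hellinger continuity of the family $\{\Pb_{\vartheta_1+\varphi_n u}\}$ in $u$ established in the Lemmas of Section~3 (the bound $\Ex|Z_n^{1/2}(u_1)-Z_n^{1/2}(u_2)|^2 \le C|u_1-u_2|^2$ controls the total variation distance, hence the oscillation of any power function). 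Given equicontinuity plus pointwise convergence of the difference to a nonnegative limit, a covering of $[0,K]$ by finitely many small intervals converts the pointwise bound into the uniform one, completing the proof. I would cite \cite{GR} and \cite{LR05} for the i.i.d.\ template of this optimality argument and note that the Poisson LAN structure makes every ingredient go through verbatim.
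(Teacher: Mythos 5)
Your proof is correct, and parts (i) and (ii) coincide with the paper's: asymptotic normality of $\Delta_n$ under $\vartheta_1$ gives the size, and the shift $\Delta_n\Rightarrow \Delta+u_*$ under $\vartheta_{u_*}$ (Le Cam's Third Lemma, or the direct recentering the paper also carries out) gives the limit power. For the LAUMP step the underlying idea is the same — transfer Neyman--Pearson optimality through the LAN structure — but your execution differs from the paper's in two ways. First, the paper constructs an explicit finite-sample Neyman--Pearson test $\psi_n^*=\1_{\{Z_n(u_*)>d_\varepsilon\}}$ with the concrete threshold $d_\varepsilon=\exp\{u_*z_\varepsilon-u_*^2/2\}$ computed from the LAN expansion, checks it lies in $\mathcal{K}_\varepsilon$, and shows its power also converges to $\beta^\star(u_*)$; you instead derive the asymptotic upper bound $\limsup_n\beta(\bar\psi_n,u)\le\beta^\star(u)$ abstractly from the limit Gaussian shift experiment via $\beta(\bar\psi_n,u)=\Ex_{\vartheta_1}[Z_n(u)\bar\psi_n]$, weak compactness along subsequences, and the Neyman--Pearson lemma in the limit (this needs the standard uniform-integrability/contiguity fact that $\Ex_{\vartheta_1}Z_n(u)\le 1$ with $\Ex Z(u)=1$, which you implicitly invoke; it is worth stating). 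Second, for uniformity over $0<u\le K$ the paper uses a ``moving alternative'' argument — the limits remain valid along any sequence $\hat u_n\in[0,K]$, so the supremum of the difference of the two power functions tends to zero — whereas you use uniform-in-$n$ equicontinuity of all power functions on $[0,K]$, obtained from the Hellinger bound $\Ex|Z_n^{1/2}(u_1)-Z_n^{1/2}(u_2)|^2\le C|u_1-u_2|^2$ of Section~3, plus a finite covering. Both mechanisms are standard and valid; your route is more self-contained in that it bounds an arbitrary competitor directly rather than routing through an explicit intermediate test, at the cost of a heavier abstract apparatus, while the paper's route yields the concrete threshold formula and a shorter uniformity step.
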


\begin{proof}
The property $\psi _n^\star\left(X^n\right)\in \mathcal{ K}_\varepsilon$ follows
immediately from the asymptotic normality (under hypothesis $\mathscr{H}_1$)
$$
\Delta _n\left(\vartheta _1,X^n\right)\Longrightarrow \Delta .
$$
Further, we have (under alternative $\vartheta_{u_*}=\vartheta
_1+u_*\varphi_n$) the convergence
$$
\beta\left(\psi_n^\star,u_*\right)\longrightarrow \Pb\left( \Delta
+u_*>z_\varepsilon \right)= \beta^\star \left(u_*\right).
$$
This follows from the Le Cam's Third Lemma and can be shown directly as
follows. Suppose that the intensity of the observed Poisson process is
$\lambda \left(\vartheta _1+u_*\varphi _n,t\right)$, then we can write
\begin{align*}
{\Delta} _n\left(\vartheta _1,X^n\right)&=\frac{1}{\sqrt{n{\rm
      I}\left(\vartheta _1\right)}}\sum_{j=1}^{n}\int_{0}^{\tau
}\frac{\dot\lambda \left(\vartheta _1,t\right)}{\lambda
\left(\vartheta
  _1,t\right)}\left[{\rm d}X_j\left(t\right)-\lambda \left(\vartheta
  _1+u_*\varphi _n,t\right){\rm d}t \right]\\
&\quad +\frac{1}{\sqrt{n{\rm
      I}\left(\vartheta _1\right)}}\sum_{j=1}^{n}\int_{0}^{\tau
}\frac{\dot\lambda \left(\vartheta _1,t\right)}{\lambda
\left(\vartheta
  _1,t\right)}\left[\lambda \left(\vartheta _1+u_*\varphi _n,t\right)-\lambda
  \left(\vartheta _1,t\right)\right]{\rm d}t \\
&=\Delta _n^*\left(\vartheta_1,X^n\right)+\frac{u_*}{{n{\rm
I}\left(\vartheta
    _1\right)}}\sum_{j=1}^{n}\int_{0}^{\tau }\frac{\dot\lambda \left(\vartheta
  _1,t\right)^2}{\lambda \left(\vartheta _1,t\right)}{\rm d}t+o\left(1\right)\\
&=\Delta
_n^*\left(\vartheta_1,X^n\right)+u_*+o\left({1}\right)\Longrightarrow
\Delta +u_*.
\end{align*}

To show that the SFT is LAUMP, it is sufficient to verify that the limit of
its power function coincides (for each fixed value $u_*>0$) with the limit of
the power of the corresponding likelihood ratio (Neyman-Person) test (N-PT)
$\psi _n^*\left(X^n\right)$ . Remind that the N-PT is the most powerful for
each fixed (simple) alternative (see, for example, Theorem 13.3 in Lehman and
Romano~\cite{LR05}). Of course, the N-PT is not a real test (in our one-sided
problem), since for its construction one needs to know the value $u_*$ of the
parameter $u$ under alternative.

The N-PT is defined by
$$
\psi _n^*\left(X^n\right)=\1_{\left\{Z_n\left(u_*\right)>d_\varepsilon
  \right\}}+q_\varepsilon \1_{\left\{Z_n\left(u_*\right)=d_\varepsilon
  \right\}},
$$
where the threshold $d_\varepsilon $ and the probability $q_\varepsilon $ are
chosen from the condition $\psi _n^*\left(X^n\right)\in \mathcal{
  K}_\varepsilon $, that is,
$$
\Pb_{\vartheta _1}\left\{Z_n\left(u_*\right)>d_\varepsilon
\right\}+q_\varepsilon\Pb_{\vartheta
  _1}\left\{Z_n\left(u_*\right)=d_\varepsilon \right\} =\varepsilon.
$$
Of course, we can put $q_\varepsilon =0 $ because the limit random variable
$Z\left(u_*\right)$ has continuous distribution function.

The threshold $d_\varepsilon $ can be found as follows. The LAN of the family
of measures at the point $\vartheta _1$ allows us to write
\begin{align*}
\Pb_{\vartheta _1}\left(Z_n\left(u_*\right)>d_\varepsilon
  \right)&=\Pb_{\vartheta _1}\left(u_*\Delta _n\left(\vartheta
  _1,X^n\right)-\frac{u_*^2}{2}+r_n>\ln d_\varepsilon \right)\\
&\longrightarrow\Pb\left(u_*\Delta -\frac{u_*^2}{2}>\ln d_\varepsilon \right)=
  \Pb\left(\Delta >\frac{\ln d_\varepsilon}{u_*}
  +\frac{u_*}{2}\right)=\varepsilon .
\end{align*}
Hence, we have
$$
\frac{\ln d_\varepsilon}{u_*} +\frac{u_*}{2}=z
_\varepsilon\qquad\text{and}\qquad d_\varepsilon =\exp\left\{u_* z
_\varepsilon-\frac{u_*^2}{2}\right\}.
$$
Therefore the N-PT
$$
\psi_n^*\left(X^n\right)=\1_{\left\{Z_n\left(u_*\right)>\exp\bigl\{u_* z
  _\varepsilon-\frac{u_*^2}{2}\bigr\}\right\}}
$$
belongs to $\mathcal{ K}_\varepsilon $.

For the power of the N-PT we have (denoting as usually $\vartheta
_{u_*}=\vartheta _1+u_*\varphi _n$)
\begin{align*}
\beta\left(\psi _n^*,u_*\right)&=\Pb_{\vartheta
  _{u_*}}\left(Z_n\left(u_*\right)>d_\varepsilon \right)=\Pb_{\vartheta
  _{u_*}}\left( u_*\Delta _n\left(\vartheta _1,X^n\right)+r_n>u_* z
_\varepsilon \right)\\
&=\Pb_{\vartheta _{u_*}}\left( \Delta _n\left(\vartheta
_1,X^n\right)+\frac{r_n}{u_*}> z _\varepsilon \right)\!\longrightarrow\!
\Pb\left(\Delta+u_*>z _\varepsilon\right)=\beta ^\star\left(u_*\right).
\end{align*}
Therefore the limits of the powers of the tests $\psi _n^\star$ and $\psi
_n^*$ coincide, that is, the Score Function test is asymptotically as good as
the Neyman-Pearson optimal one. Note that the limits are valid for any
sequence of $0\leq {u_*}\leq K$.  So, for any $K>0$, we can choose a sequence
$\hat u_n\in \left[0,K\right]$ such that
$$
\sup_{0\leq {u_*}\leq K}\left|\beta\left(\psi _n^\star,{u_*}\right)-\beta
\left(\psi_n^*,{u_*}\right)\right|= \left|\beta\left(\psi _n^\star,\hat
u_n\right)-\beta\left(\psi _n^*,\hat u_n\right)\right|\longrightarrow 0,
$$
which represents the asymptotic coincidence of the two tests and concludes the
proof.
\end{proof}

\subsection{GLRT and Wald test}

Let us remind that the maximum likelihood estimator (MLE) $\hat\vartheta _n$
is defined by the equation:
$$
L\left(\hat\vartheta _n ,\vartheta _1,X^n\right)=\sup_{\vartheta \in \left[
    \vartheta _1,b\right)}L\left(\vartheta ,\vartheta _1,X^n\right),
$$
where the likelihood ratio function is
\begin{align*}
L\left(\vartheta ,\vartheta
_1,X^n\right)&=\exp\left\{\sum_{j=1}^{n}\int_{0}^{\tau}\ln\frac{\lambda
  \left(\vartheta ,t\right)}{\lambda \left(\vartheta_1 ,t\right)}\;{\rm
  d}X_j\left(t\right)\right.\\
&\qquad\qquad \qquad \left. -n\int_{0}^{\tau}\left[{\lambda \left(\vartheta
    ,t\right)}-{\lambda \left(\vartheta_1 ,t\right)}\right]{\rm
  d}t\right\},\qquad \vartheta \in \left[ \vartheta _1,b\right).
\end{align*}

The GLRT is
$$
\hat\psi_n\left(X^n\right)=\1_{\left\{Q\left(X^n\right)> h_\varepsilon
  \right\}},
$$
where
$$
Q\left(X^n\right)= \sup_{\vartheta \in \left[ \vartheta
    _1,b\right)}{L\left(\vartheta ,\vartheta_1
  ,X^n\right)}=L\left(\hat\vartheta _n ,\vartheta_1 ,X^n\right)
\quad\text{and}\quad h_\varepsilon=\exp\{z_\varepsilon^2/2\}.
$$

The Wald's test is based on the MLE $\hat\vartheta _n $ and is defined as
follows:
$$
\psi_n^\circ\left(X^n\right)=\1_{\left\{\varphi_n^{-1}\left(\hat\vartheta
  _n-\vartheta _1\right)> z_\varepsilon \right\}}.
$$

The properties of these tests are given in the following Proposition.

\begin{proposition}
The tests $\hat\psi_n\left(X^n\right)$ and $\psi_n^\circ\left(X^n\right)$ belong
to $\mathcal{K}_\varepsilon$, their power functions $\beta (\hat\psi_n,u_*)$ and
$\beta \left(\psi_n^\circ,u_*\right)$ converge to $\beta ^\star\left(u_*\right)$, and
therefore they are LAUMP.
\end{proposition}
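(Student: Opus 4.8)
The strategy is to reduce both tests to the Score Function test statistic, for which the Proposition of the previous subsection already gives everything we need. The key analytic tool is the weak convergence established in Theorem~\ref{T1}: under the alternative $\vartheta_{u_*}$, the process $Z_n(\cdot)$ converges in $\mathscr{C}_0(\mathbb{R}_+)$ to $Z(\cdot,u_*)$, and under the hypothesis ($u_*=0$) it converges to $Z(u)=\exp\{u\Delta-u^2/2\}$. Since $Q(X^n)=\sup_{u\geq 0}Z_n(u)$ is a continuous functional on $\mathscr{C}_0(\mathbb{R}_+)$, the continuous mapping theorem gives, under $\mathscr{H}_1$,
$$
Q(X^n)=\sup_{u\geq 0}Z_n(u)\Longrightarrow \sup_{u\geq 0}\exp\Bigl\{u\Delta-\tfrac{u^2}{2}\Bigr\}
=\exp\Bigl\{\tfrac12(\Delta^+)^2\Bigr\},
$$
the supremum being attained at $u=\Delta^+=\max(\Delta,0)$. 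Hence $\Pb_{\vartheta_1}\{Q(X^n)>h_\varepsilon\}\to\Pb\{\tfrac12(\Delta^+)^2>\tfrac12 z_\varepsilon^2\}=\Pb\{\Delta>z_\varepsilon\}=\varepsilon$, which shows $\hat\psi_n\in\mathcal{K}_\varepsilon$ with the stated threshold $h_\varepsilon=\exp\{z_\varepsilon^2/2\}$. Under the alternative $\vartheta_{u_*}$ the same continuous mapping argument applied to the limit $Z(\cdot,u_*)$ yields $Q(X^n)\Rightarrow\exp\{\tfrac12((\Delta+u_*)^+)^2\}$, and therefore
$$
\beta(\hat\psi_n,u_*)\longrightarrow\Pb\Bigl\{\tfrac12\bigl((\Delta+u_*)^+\bigr)^2>\tfrac12 z_\varepsilon^2\Bigr\}
=\Pb\{\Delta+u_*>z_\varepsilon\}=\beta^\star(u_*),
$$
since $z_\varepsilon>0$. (One should note in passing that the event $\{Q(X^n)=h_\varepsilon\}$ is asymptotically negligible because the limit law has a continuous distribution function, so no randomization is needed.)

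For the Wald test the plan is to establish that $\varphi_n^{-1}(\hat\vartheta_n-\vartheta_1)=\Delta_n(\vartheta_1,X^n)+o_p(1)$ under both $\mathscr{H}_1$ and the local alternatives, so that its statistic is asymptotically equivalent to the SFT statistic. The argmax functional $Z_n(\cdot)\mapsto\mathrm{argmax}_{u\geq 0}Z_n(u)$ is, on the set where the argmax is unique, a continuous functional of the trajectory; applying it to the weak limit, under $\mathscr{H}_1$ we get $\varphi_n^{-1}(\hat\vartheta_n-\vartheta_1)=\mathrm{argmax}_{u\geq 0}Z_n(u)\Rightarrow\mathrm{argmax}_{u\geq 0}\exp\{u\Delta-u^2/2\}=\Delta^+$, and under $\vartheta_{u_*}$ the limit is $(\Delta+u_*)^+$. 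Consequently
$$
\beta(\psi_n^\circ,u_*)=\Pb_{\vartheta_{u_*}}\bigl\{\varphi_n^{-1}(\hat\vartheta_n-\vartheta_1)>z_\varepsilon\bigr\}
\longrightarrow\Pb\{(\Delta+u_*)^+>z_\varepsilon\}=\Pb\{\Delta+u_*>z_\varepsilon\}=\beta^\star(u_*),
$$
using again $z_\varepsilon>0$, and with $u_*=0$ this gives $\psi_n^\circ\in\mathcal{K}_\varepsilon$. Alternatively, one can avoid the argmax functional altogether by the classical route: differentiate the log-likelihood ratio, use the Taylor expansion together with the LAN representation to write the normalized MLE as $\Delta_n/\mathrm{I}(\vartheta_1)^{1/2}+o_p(1)$ on $\mathbb{U}_n^+$, uniformly on compacts, and invoke the asymptotic normality $\Delta_n\Rightarrow\Delta$ under $\mathscr{H}_1$ and $\Delta_n\Rightarrow\Delta+u_*$ under $\vartheta_{u_*}$ established in Section~2.

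Once the two convergences $\beta(\hat\psi_n,u_*)\to\beta^\star(u_*)$ and $\beta(\psi_n^\circ,u_*)\to\beta^\star(u_*)$ are in hand, the LAUMP property follows exactly as in the proof of the Proposition for the SFT: the convergences hold uniformly for $0\le u_*\le K$ (this uniformity is inherited from the uniform-on-compacts control of the remainder $r_n$ in the LAN expansion and from tightness of $Z_n$ in $\mathscr{C}_0(\mathbb{R}_+)$), so for any $K>0$ and any competing test $\bar\psi_n\in\mathcal{K}_\varepsilon$ one has $\sup_{0<u\le K}|\beta(\hat\psi_n,u)-\beta(\psi_n^\star,u)|\to0$, and the N-PT comparison already carried out for $\psi_n^\star$ transfers verbatim. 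The main obstacle is the step asserting that the argmax (equivalently the MLE) is a well-behaved continuous functional: one must rule out the argmax escaping to the boundary $\varphi_n^{-1}(b-\vartheta_1)$ or to infinity, which is precisely what the exponential tail estimate~\eqref{c} of the last lemma provides, and one must ensure uniqueness of the argmax of the limit process, which holds almost surely for $Z(u)=\exp\{u\Delta-u^2/2\}$. Handling the GLRT is by contrast routine, since $\sup$ is globally Lipschitz in the uniform metric and no uniqueness issue arises.
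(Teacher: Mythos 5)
Your proposal is correct and follows essentially the same route as the paper: weak convergence of $Z_n(\cdot)$ from Theorem~\ref{T1} (under hypothesis and under the moving alternative), the continuous mapping theorem applied to the supremum functional for the GLRT and to the argmax (asymptotic distribution of the MLE) for the Wald test, identification of the limit power with $\beta^\star(u_*)$ under the assumption $\varepsilon\le\tfrac12$, and transfer of the LAUMP property via the Neyman--Pearson comparison already made for the SFT. The extra care you take with uniqueness of the argmax and non-escape of the MLE (via the tail estimate~\eqref{c}) is exactly what the paper delegates to the cited references, so nothing of substance differs.
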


\begin{proof}
Let us put $\vartheta =\vartheta _1+u\varphi_n$ and denote $\hat u_n=
\varphi_n^{-1}\left(\hat \vartheta_n -\vartheta _1\right)$. We have
\begin{align*}
\Pb_{\vartheta _1}\left\{\sup_{\vartheta\in \left[\vartheta
    _1,b\right)}L\left(\vartheta ,\vartheta_1 ,X^n\right)>h_\varepsilon
  \right\}&=\Pb_{\vartheta _1}\left\{\sup_{u\in
    \mathbb{U}_n^+}L\left(\vartheta_1+u\varphi _n ,\vartheta_1
  ,X^n\right)>h_\varepsilon \right\}\\
&=\Pb_{\vartheta
  _1}\left\{\sup_{u\in\mathbb{U}_n^+}Z_n\left(u\right)>h_\varepsilon \right\}.
\end{align*}

According to Theorem \ref{T1} (with $u_*=0$), we have the weak convergence
(under $\vartheta_1$) of the measure of the stochastic processes
$(Z_n\left(u\right),\ u\geq 0)$ to those of the process
$(Z\left(u\right),\ u\geq 0)$. This provides us the convergence of the
distributions of all continuous in uniform metric functionals. Hence
\begin{align*}
Q\left(X^n\right)&=\sup_{u>0}Z_n\left(u\right) \Longrightarrow
\sup_{u>0}Z\left(u\right)\\
&=\sup_{u>0}\ \exp\left\{u\Delta -\frac{u^2}{2}\right\}
=\exp\left\{\frac{\Delta^2}{2}\1_{\left\{\Delta\geq 0\right\}}\right\},
\end{align*}
which yields (we suppose that $\varepsilon \leq \frac{1}{2}$)
$$
\Ex_{\vartheta _1}\hat\psi_n\left(X^n\right)\longrightarrow
\Pb\left\{\Delta\1_{\left\{\Delta\geq 0\right\}}
>z_\varepsilon \right\} =\Pb\left\{\Delta
>z_\varepsilon \right\} =\varepsilon .
$$

Using the same weak convergence we obtain the asymptotic normality of the MLE
(see \cite{IH81} or \cite{Kut98}):
$$
\hat u_n =\frac{\hat \vartheta_n -\vartheta _1}{\varphi _n}\Longrightarrow
\hat u=\Delta\1_{\left\{\Delta\geq 0\right\}},
$$
and hence $\Ex_{\vartheta _1}\psi_n^\circ\longrightarrow\varepsilon$.  So both
$\hat\psi_n$ and $\psi_n^\circ$ belong to $\mathcal{K}_\varepsilon$.

Now, let us fix some $u_*>0$ and study the limit behavior of the power
functions of the tests.

Using the weak convergence of the likelihood ratio process under the
alternative $\vartheta_{u_*} =\vartheta _1+u_*\varphi_n$, we have
\begin{align*}
Q\left(X^n\right)=\sup_{u> 0}Z_n\left(u\right)\Longrightarrow\sup_{u>
  0}Z\left(u,u_*\right)&=\sup_{u>0}\ \exp\left\{u\Delta+u
u_*-\frac{u^2}{2}\right\}\\
&=\exp\left\{\frac{(\Delta+u_*)^2}{2}\1_{\left\{\Delta+u_*\geq
  0\right\}}\right\}.
\end{align*}

Hence (we suppose again that $\varepsilon \leq \frac{1}{2}$),
\begin{align*}
\beta\left(\hat\psi_n,u_*\right) &= \Pb_{\vartheta
  _{u_*}}\left\{Q\left(X^n\right)> h_\varepsilon \right\}\\
&\longrightarrow \Pb\left\{\left(\Delta+u_*\right)\1_{\left\{\Delta+u_*\geq
  0\right\}}> z_\varepsilon \right\}=\Pb\left\{\Delta > z_\varepsilon
-u_*\right\}\!=\!\beta ^\star\left(u_*\right).
\end{align*}

Similarly we have
$$
\beta\left(\psi_n^\circ,u_*\right)\longrightarrow
\Pb\left\{\left(\Delta+u_*\right)\1_{\left\{\Delta+u_*\geq
  0\right\}}>z_\varepsilon\right\}=\beta ^\star\left(u_*\right).
$$

Therefore the tests are LAUMP.
\end{proof}

\bigskip

\textbf{Example 1.} As the family of measures is LAN and the problem is
asymptotically equivalent to the corresponding hypothesis testing problem for
a Gaussian model, we propose here a similar test for Gaussian observations.

Suppose that the random variable $X\sim \mathcal{ N}\left(u,1\right)$ and we
have to test the hypothesis $\mathcal{ H}_1\ :\ u=0$ against $\mathcal{
  H}_2\ :\ u>0$. Then the SFT $\hat
\psi\left(X\right)=\1_{\left\{X>z_{\varepsilon }\right\}}$ is the uniformly
most powerful in the class of tests of size $\varepsilon $. Its power function
is $\beta \left(\hat\psi,u_*\right)=\Pb\left(\Delta >z_\varepsilon
-u_*\right)$. The log-likelihood function is
$$
L\left(u,X\right)=-\frac{1}{2}\ln\left(2\pi
\right)-\frac{1}{2}\left(X-u\right)^2
$$
The one-sided MLE $\hat u$ is given by
$$
\hat u=\argmax_{u\geq 0}L(X,u)=\max\{X,0\},
$$
and it is easy to see that the test $\hat \psi\left(X\right) $ and the Wald
test $\psi^\circ(X)=\1_{\left\{\hat u>z_\varepsilon \right\}}$ have identical
power functions.

\bigskip

Let us note, that the asymptotic equivalence to the SFT and the optimality is
a well known property of these tests in regular statistical experiments (see,
for example, \cite{LR05} and \cite{Kut13}). We present these properties here
in order to compare the asymptotics of these tests in regular and singular
situations (see \cite{DKY-2}). In particular, we will see that the asymptotic
properties of these tests in singular situations will be essentially
different.

\subsection{Bayes  tests}

Suppose now that the unknown parameter $\vartheta $ is a random variable with
\textit{a priori} density $p\left(\theta \right)$,
$\vartheta_1\leq\theta<b$. Here $p\left(\cdot \right)$ is a known continuous
function satisfying $p\left(\vartheta _1\right)>0$.  We consider two
approaches. The first one is based on the Bayes estimator and the second one
on the averaged likelihood ratio function.

\bigskip

The first test, wich we call BT1, is a Wald type test but based on the Bayes
estimator (BE) $\tilde\vartheta _n$:
$$
\tilde\psi _n\left(X^n\right)=\1_{\left\{\varphi _n^{-1}\left(\tilde\vartheta
  _n-\vartheta _1\right)>k_\varepsilon \right\}} .
$$

Remind that the BE for quadratic loss function is
$$
\tilde\vartheta _n=\int_{\vartheta _1}^{b}\theta\, p\left(\theta
|X^n\right){\rm d}\theta =\frac{\int_{\vartheta _1}^{b}\theta\, p\left(\theta
  \right)L\left(\theta ,\vartheta _1,X^n\right){\rm d}\theta }{\int_{\vartheta
    _1}^{b} p\left(\theta \right)L\left(\theta ,\vartheta _1,X^n\right){\rm
    d}\theta} .
$$
After the change of variables $\theta =\vartheta _1+v\varphi _n$ in the
integrals, we obtain the relation
$$
\varphi_n^{-1}\left(\tilde\vartheta_n-\vartheta_1\right)=
\frac{\int_{\mathbb{U}_n^+}v p\left(\vartheta _1+v\varphi
  _n\right)Z_n\left(v\right){\rm d}v}{\int_{\mathbb{U}_n^+}p\left(\vartheta
  _1+v\varphi _n\right) Z_n\left(v\right){\rm d}v}.
$$
The properties of $Z_n\left(\cdot \right)$ established in the proof of Theorem
\ref{T1} yield the following convergence in distribution under the hypothesis
$\mathscr{H}_1$ (see~\cite{IH81} or \cite{Kut98})
\begin{align*}
\varphi_n^{-1}\left(\tilde\vartheta_n-\vartheta_1\right)\Longrightarrow \tilde
u&=\frac{\int_{0}^{\infty }v Z\left(v\right){\rm d}v}{\int_{0}^{\infty }
  Z\left(v\right){\rm d}v}\\
&=\frac{\int_{0}^{\infty }
  \left(v-\Delta\right)\exp\left\{-\frac{\left(v-\Delta\right)^2}2\right\}{\rm
    d}v}{\int_{0}^{\infty }
  \exp\left\{-\frac{\left(v-\Delta\right)^2}2\right\}{\rm d}v}+\Delta\\
&=\frac{-\frac1{\sqrt{2\pi}}\exp
  \left\{-\frac{\left(v-\Delta\right)^2}2\right\}\Bigm|_{v=0}^{+\infty}}
{\frac1{\sqrt{2\pi}}\int_{0}^{\infty
  }\exp\left\{-\frac{\left(v-\Delta\right)^2}2\right\}{\rm d}v}+\Delta\\
&=\frac{\frac1{\sqrt{2\pi}}\exp\left\{-\frac{\Delta^2}2\right\}}
{\left(1-F\left(-\Delta\right)\right)}+\Delta
=\frac{f\left(\Delta\right)}{F\left(\Delta\right)}+\Delta,
\end{align*}
where $f\left(\cdot \right)$ and $F\left(\cdot \right)$ are the density and
the distribution function of the standard normal Gaussian random variable
$\Delta$. Hence, if we take $k_\varepsilon $ to be solution of the equation
$$
\Pb\left\{\frac{f\left(\Delta \right)}{F\left(\Delta \right)}+\Delta
>k_\varepsilon \right\}=\varepsilon,
$$
then the BT1 $ \tilde \psi _n$ belongs to $\mathcal{ K}_\varepsilon$.

A similar calculation under the alternative
$\vartheta_{u_*}=\vartheta+u_*\varphi_n$ allows us to evaluate the limit power
function of the BT1 as follows:
\begin{align*}
\beta\left(\tilde\psi
_n,u_*\right)&=\Pb_{\vartheta_{u_*}}\left\{\varphi_n^{-1}
\left(\tilde\vartheta_n-\vartheta_1\right)>k_\varepsilon\right\}\\
&\longrightarrow\Pb\left\{\frac{\int_{0}^{\infty } v Z\left(v,u_*\right){\rm
    d}v}{\int_{0}^{\infty } Z\left(v,u_*\right){\rm
    d}v}>k_\varepsilon\right\}\\
& =\Pb\left\{\frac{f\left(\Delta+u_*\right)}
          {F\left(\Delta+u_*\right)}+\Delta+u_*>k_\varepsilon\right\}.
\end{align*}

\bigskip

Another possibility in Bayesian approach is to define the test as the test
with the minimal mean error of the second kind. For a test $\bar\psi _n$, let
us denote $\alpha \left(\bar\psi _n,\theta \right)=1-\beta \left(\bar\psi
_n,\theta \right) $ the error of the second kind and introduce the mean error
of the second kind:
$$
\alpha \left(\bar\psi _n\right)=\int_{\vartheta _1}^{b} \alpha \left(\bar\psi
_n,\theta \right)\,p\left(\theta \right){\rm d}\theta .
$$
The Bayes test $\tilde\psi_n^\star\left(X^n\right)$ is defined as the test
which minimizes this mean error:
$$
\alpha \left(\tilde\psi_n^\star\right)=\inf_{ \bar\psi _n\in \mathcal{
    K}_\varepsilon }\alpha \left(\bar\psi _n\right) .
$$

We can rewrite the above integral as follows
\begin{align*}
\int_{\vartheta _1}^{b}\left(1- \Ex_\theta \bar\psi
_n\left(X^n\right)\right)\,p\left(\theta \right)\,{\rm
  d}\theta&=\int_{\vartheta _1}^{b} \int_{}^{}\left(1-\bar\psi
_n\left(x^n\right)\right)\mathrm{d}{\Pb_\theta}\; p\left(\theta \right)\mathrm{d}\theta\\
&= \int_{}^{}\left(1-\bar\psi_n\left(x^n\right)\right)\mathrm{d}{\PP}
=\EE\left(1-\bar\psi_n\left(X^n\right)\right),
\end{align*}
where we denoted $\Pb_\theta$ the distribution of the sample $X^n$ and
$$
\PP\left(X^n\in A\right)=\int_{\vartheta _1}^{b}\Pb_{\theta }\left(X^n\in
A\right)\,p\left(\theta \right)\mathrm{d}\theta .
$$

The averaged power $\beta\left(\bar\psi
_n\right)=\EE\bar\psi_n\left(X^n\right)$ is the same as if we have two simple
hypothesis. Under $\mathscr{ H}_1$ we observe a Poisson process of intensity
function $\lambda \left(\vartheta _1,\cdot\right)$, and under the alternative
$\mathscr{ H}_2 $ the observed point process has random intensity and its
measure is~$\PP$. This process is a mixture $\bigl($according to the density
$p(\theta)\bigr)$ of inhomogeneous Poisson processes with intensities $\lambda
\left(\theta ,\cdot \right)$, $\theta\in \left(\vartheta _1,b\right)$. This
means that we have two simple hypotheses and the most powerful
(Neyman-Pearson) test is of the form
$$
\tilde\psi_n^*=\1_{\left\{ \tilde L\left(X^n\right)>\tilde m_\varepsilon
  \right\}},\qquad \Ex_{\vartheta _1}\tilde\psi_n^*\left(X^n\right)=
\varepsilon ,
$$
where the averaged likelihood ratio
$$
\tilde L\left(X^n\right)= \frac{\mathrm{d}{\PP}\hfill}{\mathrm{d}{\Pb_{\vartheta _1}}}
\left(X^n\right) =\int_{\vartheta
  _1}^{b}\frac{\mathrm{d}{\Pb_{\theta}}\hfill}{\mathrm{d}{\Pb_{\vartheta _1}}}
\left(X^n\right)\,p\left(\theta \right)\,{\rm d}\theta.
$$
To study this test under hypothesis we change the variables:
$$
\tilde L\left(X^n\right)=\int_{\vartheta _1}^{b}L\left(\theta ,\vartheta
_1,X^n\right)\,p\left(\theta \right)\,{\rm d}\theta
=\varphi_n\int_{0}^{\varphi_n^{-1}\left(b-\vartheta _1\right)}
Z_n\left(v\right)\,p\left(\vartheta _1+v\varphi_n\right)\,{\rm d}v.
$$
The limit of the last integral was already described above and this allow us
to write
\begin{align*}
&{R}_n\left(X^n\right)= \frac{\tilde L\left(X^n \right)}{p\left(\vartheta
    _1\right)\varphi _n}=\frac1{p\left(\vartheta
    _1\right)}\int_{0}^{\varphi_n^{-1}\left(b-\vartheta _1\right)} e^{v\Delta
    _n-\frac{v^2}{2}+r_n }\,p\left(\vartheta _1+v\varphi_n\right)\,{\rm d}v\\
&\quad\Longrightarrow \int_{0}^{\infty } e^{v\Delta -\frac{v^2}{2} }\,{\rm
    d}v=e^{\frac{\Delta ^2}{2 } } \int_{ -\Delta }^{\infty }
  {e^{-\frac{y^2}{2} }\,{\rm d}y}{}=\sqrt{2\pi }\,e^{\frac{\Delta
      ^2}{2}}\left(1-F\left(-\Delta \right)\right)=\frac{F\left(\Delta
    \right)}{f\left(\Delta \right)},
\end{align*}
where $F\left(\cdot \right)$ and $f\left(\cdot \right)$ are again the
distribution function and the density of the standard Gaussian random variable
$\Delta$. Hence, if we take $m_\varepsilon $ to be solution of the equation
$$
\Pb\left\{\frac{F\left(\Delta \right)}{f\left(\Delta \right)}>m_\varepsilon
\right\}=\varepsilon,
$$
then the test $\tilde\psi_n^\star\left(X^n\right)=\1_{\left\{R_n>m_\varepsilon
  \right\}} $, which we call BT2, belongs to $\mathcal{ K}_\varepsilon$ and
coincides with the test $\tilde\psi_n^*\left(X^n\right)$ if we put $\tilde
m_\varepsilon=m_\varepsilon\, p\left(\vartheta _1\right)\varphi _n$.

A similar calculation under the alternative
$\vartheta_{u_*}=\vartheta+u_*\varphi_n$ allows us to evaluate the limit power
function of the BT2 as follows:
$$
\beta\left(\tilde\psi_n^\star,u_*\right)=\Pb_{\vartheta _{u_*}}\left\{
R_n>m_\varepsilon \right\}\longrightarrow \Pb\left\{ \frac{F\left(\Delta+u_*
  \right)}{f\left(\Delta+u_* \right)}>m_\varepsilon \right\}.
$$

\section{Simulations}

Below we present the results of numerical simulations for the power functions
of the tests. We observe $n$ independent realizations $X_j=\left(X_j(t),\ t
\in \left[0,3\right]\right)$, $j=1,...,n$, of inhomogeneous Poisson process of
intensity function
$$
\lambda \left(\vartheta ,t\right)=3\cos^2(\vartheta t)+1,\qquad 0\leq t \leq
3,\qquad \vartheta \in\left[3,7\right),
$$
where $\vartheta_1=3$. The Fisher information at the point $\vartheta_1$ is
${\rm I}\left(\vartheta_1\right) \approx 19.82$.  Recall that all our tests
(except Bayes tests) in regular case are LAUMP. Therefore they have the same
limit power function. Our goal is to study the power functions of different
tests for finite $n$.

The normalized likelihood ratio $Z_n(u)$ is given by the expression
\begin{align*}
Z_n(u)&=\exp\biggl\{\varphi_n\sum_{j=1}^n\int_0^3
\ln\frac{3\cos^2\left(\left(3+u\varphi_n\right)t\right)+1}
        {3\cos^2\left(3t\right)+1}\,\mathrm{d} X_j\left(t \right)\\
&\qquad\qquad-\frac{3n}{4\left(3+u\varphi_n\right)}
\sin\left(6\left(3+u\varphi_n\right)\right)+\frac{n}4\sin(18)\biggr\},
\end{align*}
where $\varphi _n=\left(19.82\,n\right)^{-1/2}$.

The numerical simulation of the observations allows us to obtain the power
functions presented in Figures~\ref{PF_Regular_2}
and~\ref{PF_Regular_GLRT_Wald}.  For example, the computation of the numerical
values of the power function of the SFT was done as follows. We define an
increasing sequence of $u$ beginning at $u=0$. Then, for every~$u$, we
simulate $N$ i.i.d.\ observations of n-tuples of inhomogeneous Poisson
processes $X^{n,i}$, $i=1,...,N$, with the intensity function
$\lambda\left(3+u\varphi_n,t\right)$ and calculate the corresponding
statistics $\Delta_n(3,X_{n,i})$, $i=1,...,N$. The empirical frequency of
acceptation of the alternative gives us an estimate of the power function:
$$
\beta\left(\psi _n^\star,u\right)\approx \frac{1}{N}\sum_{i=1}^N
\1_{\left\{\Delta_n(3,X_{n,i})>z_\varepsilon\right\}}.
$$
We repeat this procedure for different values of $u$ until the values of
$\beta\left(\psi _n^\star,u\right)$ become close to $1$.

\begin{figure}[htb]
\begin{center}
\includegraphics[width=\textwidth]{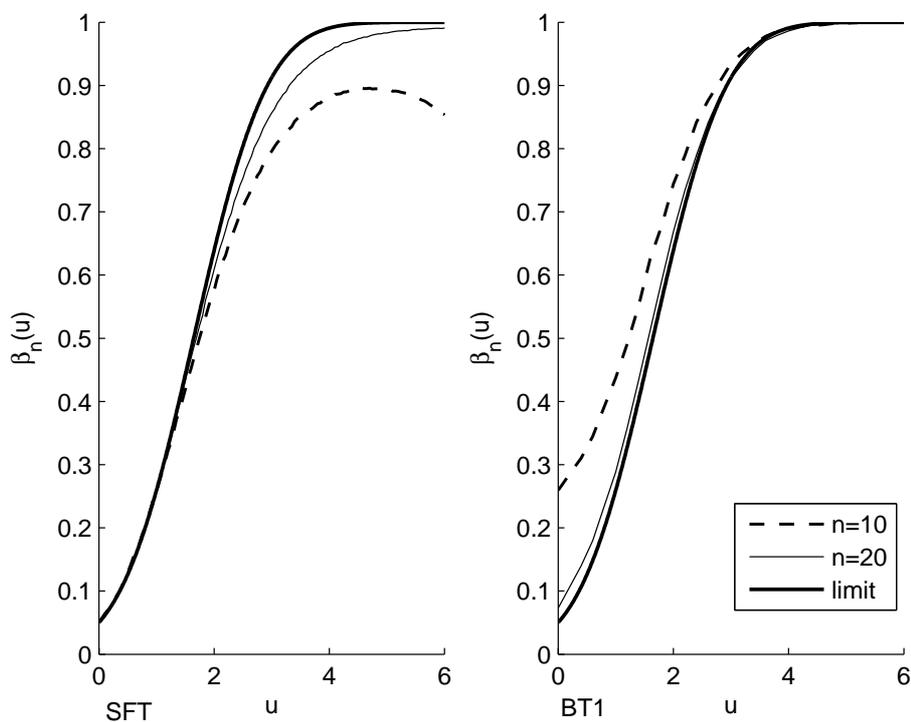}
\caption{\label{PF_Regular_2}Power functions of SFT and BT1}
\end{center}
\end{figure}

\begin{figure}[htb]
\begin{center}
\includegraphics[width=\textwidth]{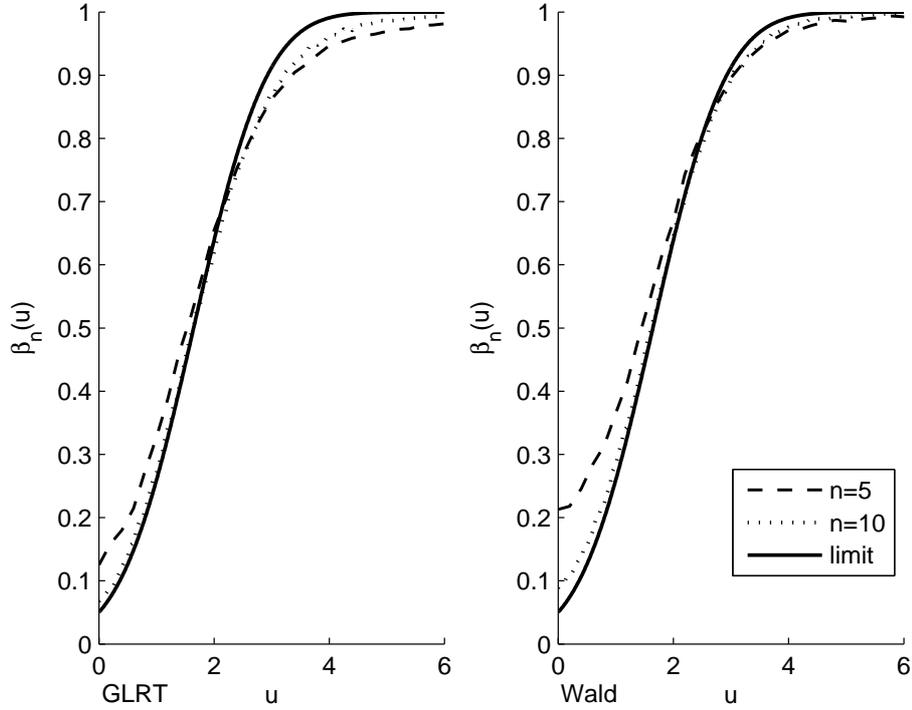}
\caption{\label{PF_Regular_GLRT_Wald}Power functions of GLRT and WT}
\end{center}
\end{figure}

In the computation of the power function of the Bayes test BT1, we take as
{\it a priori} law the uniform distribution, that is, $\vartheta\sim
\mathcal{U}([3,7])$. The thresholds of the BT1 are obtained by simulating
$M=10^5$ random variables $\Delta_i \sim \mathcal{ N}\left(0,1 \right)$,
$i=1,\ldots,M$, calculating for each of them the quantity
$$
\frac{f\left(\Delta_i\right)}{F\left(\Delta_i\right)}+\Delta_i,\qquad
i=1,\ldots,M,
$$
and taking the $\left(1-\varepsilon\right)M$-th greatest between them. Some of
the thresholds are presented in Table~\ref{Thr_BT1}.

\begin{table}[htb]
\begin{center}
\begin{tabular}{|c|c|c|c|c|c|c|c|}
  \hline $\varepsilon$ & 0.01 & 0.05 & 0.10 & 0.2 & 0.4 & 0.5 \\ \hline
  $k_\varepsilon$ & 2.325 & 1.751 & 1.478 & 1.193 & 0.895& 0.794 \\ \hline
\end{tabular}
\end{center}
\caption{\label{Thr_BT1}Thresholds of BT1}
\end{table}

Note that for the small values of $n$, under alternative, the power function
of SFT starts to decrease (see Figure~\ref{PF_Regular_GLRT_Wald}). This
interesting fact can be explained by the strongly non linear dependence of the
likelihood ratio on the parameter. The test statistic
$\Delta_n=\Delta_n\left(3,X^n\right)$ can be rewritten as follows:
\begin{align*}
\Delta_n&=\varphi_n\sum_{j=1}^n\int_{0}^{T }\frac{\dot\lambda \left(\vartheta
  _1,t\right)}{\lambda \left(\vartheta_1,t\right)}\left[{\rm d}X_j(t)-\lambda
  \left(\vartheta _1+u\varphi_n,t\right){\rm d}t \right]\\
 &\qquad+\sqrt{\frac{n}{{\rm
      I}\left(\vartheta_1\right)}}\int_{0}^{T}\frac{\dot\lambda
  \left(\vartheta _1,t\right)}{\lambda
  \left(\vartheta_1,t\right)}\left[\lambda \left(\vartheta
  _1+u\varphi_n,t\right)-\lambda \left(\vartheta_1,t\right) \right]{\rm d}t\\
&=-3\varphi_n\sum_{j=1}^n\int_{0}^{3}\frac{t\sin(6
  t)}{3\cos^2(3\,t)+1}\left[{\rm
    d}X_j\left(t\right)-\left(3\cos^2\!\left(\left(3+u\varphi_n\right)t\right)\!+1\right){\rm
    d}t \right]\\
 &\qquad+9\sqrt{\frac{n}{{\rm I}\left(\vartheta_1\right)}}\!\int_{0}^{3
}\!\frac{t\sin(6 t)}{3\cos^2(3\,t)+1}\times\left[\cos^2(3\,t)-
  \cos^2\left(\left(3+u\varphi_n\,\right)t\right)\right]{\rm d}t.
\end{align*}
The last integral becomes negative for some values of $u$, which explains the
loss of power of the SFT (for $n=10$).

\section{Acknowledgements}

This study was partially supported by Russian Science Foundation (research
project No. 14-49-00079). The authors thank the Referee for helpful comments.


\begin{thebibliography}{99}


\bibitem{Bro71} Brown, M.  Discrimination of Poisson processes. {\it Ann. Math.
Statist.}, 1971, 42, 773--776.

\bibitem{DK03} Dachian, S., Kutoyants, Yu. A.  On cusp estimation of ergodic
  diffusion process. {\it J. Statist. Plann. Inference}, 2003, 117, 153-199.

\bibitem{DK09} {Dachian S., Kutoyants Yu. A.}, {Hypotheses testing : Poisson
  versus stress-release}, {\it J. Statist. Plan. Inference,} 2009, 139,
  1668-1684.

\bibitem{DKY-2} {Dachian S., Kutoyants Yu. A., Yang, L.} On hypothesis testing
  for Poisson processes. Singular case. Submitted, 2014.

\bibitem{FK05} {Fazli K., Kutoyants Yu. A.}, {Two simple hypotheses testing
  for Poisson process}, {\it Far East J. Theor.  Stat.}, 2005, 15, 2, 251-290.

\bibitem{IH81} {Ibragimov, I.A. and Hasminskii, R.Z.}, ``{\it Statistical
  Estimation.\ Asymptotic Theory}'', Springer, New York, 1981.

\bibitem{Kut77} Kutoyants Yu. A. Asymptotically optimal algorithm of detecting
  Poisson signals. {\it Radiotehnika i Elektronika}, 1977, 22, 3, 621-625.

\bibitem{Kut98} {Kutoyants, Yu.~A.}, {\it Statistical Inference for Spatial
  Poisson Processes}, Lect.\ Notes Statist.~\textbf{134}, Springer-Verlag, New
  York, 1998.

\bibitem{Kut13} {Kutoyants, Yu.~A.}, {\it Introduction to Statistics of
  Poisson Processes}, to appear, 2014.

\bibitem{LC-Y} Le Cam, L. {\it Asymptotic Methods in Statistical Decision
  Theory}. Springer, New York, 1986.

\bibitem{LW87} L\'eger, C. and Wolfson, D. B. Hypothesis testing for a
  non-homogeneous Poisson process. {\it Comm.  Statist. Theory Methods,} 1987,
  3, 3, 439-455.

\bibitem{LR05} {Lehmann, E. and Romano, J.}, {\it Testing Statistical
  Hypotheses}, Springer, Heidelberg, 2005.

\bibitem{Lie75} Liese, F. { Eine informationstheoretische Bedingung f\H{u}r
  die \H{A}quivalenz unbegrenzt teilbarer Punktprozesse}. {\it Math.
  Nachr}. 1975, 70, 183--196.

\bibitem{LL97} Liese, F. and Lorz, U. {\sl Contiguity and LAN-property of
  sequences of Poisson processes.} {\it Kybernetika}, 1999, 35, 3, 281-308.

\bibitem{GR} Roussas, G. G. {\it Contiguity of Probability Measures.}
  Cambridge University Press, 1972.

\bibitem{STS} Sung,Y., Tong, L., Swami, A.  Asymptotic locally optimal
  detector for large-scale sensor networks under the Poisson regime. {\it IEEE
    Trans.  Signal Proces.}, 2005, 53, 6.


\end{thebibliography}
\end{document}